\def\d{\delta}
\def\g{\gamma}
\def\G{\Gamma}
\def\k{\kappa}
\def\l{\lambda}
\def\m{\mu}
\def\n{\nu}
\def\s{\sigma}
\def\p{\pi}
\def\x{\xi}
\def\z{\zeta}
\newcommand{\R}{{\mathbb R}}
\newcommand{\N}{\mathbb{N}}
\newcommand{\supp}{\text{supp}}
\newcommand{\Hull}{\text{Hull}}
\newcommand{\sP}{{\mathscr P}}
\newcommand{\sW}{{\mathscr W}}
\newcommand{\domain}{\mathbb{R}^d}
\newcommand{\co}{\colon}
\theoremstyle{plain}
\newtheorem{theorem}{Theorem}[section]
\newtheorem{defn}[theorem]{Definition}
\newtheorem{lemma}[theorem]{Lemma}
\newtheorem{example}[theorem]{Example}
\newtheorem{propn}[theorem]{Proposition}
\newtheorem{remark}[theorem]{Remark}
\titleformat{\section}[block]
  {\normalfont\scshape\filcenter}{\thesection}{1em}{}
\renewcommand\subsection
\renewcommand\subsubsection
\newenvironment{nospaceflalign*}
 {\setlength{\abovedisplayskip}{5pt}\setlength{\belowdisplayskip}{5pt}%
  \csname flalign*\endcsname}
 {\csname endflalign*\endcsname\ignorespacesafterend}
\renewenvironment{abstract}{%
\hfill\begin{minipage}{0.95\textwidth}
\rule{\textwidth}{1pt}}
{\par\noindent\rule{\textwidth}{1pt}\end{minipage}}
\renewcommand\@maketitle{%
\hfill
\begin{minipage}{0.95\textwidth}
\begin{centering}
\vskip 2em
\let\footnote\thanks 
{\LARGE \@title \par }
\vskip 1.5em
{\Large \@author \par }
\end{centering}
\end{minipage}
\vskip 1em \par
}
\title{Some Convexity Criteria for Differentiable Functions on the $2$-Wasserstein Space}
\author{Guy Parker\thanks{Email: guy.m.parker@durham.ac.uk}}
\affil{\normalsize{\emph{Department of Mathematical Sciences, Durham University, United Kingdom}}}
\begin{document}
\maketitle
\justify
\thispagestyle{plain}
\begin{abstract}
\begin{center}
    Abstract
\end{center}
We show that a differentiable function on the $2$-Wasserstein space is geodesically convex if and only if it is also convex along a larger class of curves which we call `acceleration-free'. In particular, the set of acceleration-free curves includes all generalised geodesics. We also show that geodesic convexity can be characterised through first and second-order inequalities involving the Wasserstein gradient and the Wasserstein Hessian. Subsequently, such inequalities also characterise convexity along acceleration-free curves.
\end{abstract}

\section{Introduction} 
\subsection{Acceleration-Free Curves and Main Result.}
In the theory developed in the book of Ambrosio, Gigli and Savar\'e \cite{Ambrosio2008}, the notion of \emph{geodesic semi-convexity} is a key ingredient in establishing the existence and uniqueness of gradient flows in the $2$-Wasserstein space $(\sP_2(\domain),W_2)$. 
However, without the stronger notion of \emph{semi-convexity along generalised geodesics}, we can not use the theory developed in \cite{Ambrosio2008} to establish several other important properties of gradient flows such as stability and optimal error estimates. 
Despite the strength of convexity along generalised geodesics over geodesic convexity in this regard, it is shown in \cite[Chapter 9]{Ambrosio2008} that, for the three energy functionals introduced by McCann in \cite{McCann1997}, these two notions coincide. 
This manuscript shows that this occurrence is not limited to these energy functionals. 
On the contrary, in section \hyperref[sec:one]{Section Three}, we prove the following Theorem.
\begin{theorem}\label{geodesicequivalencetheorem}
Let $F\co\sP_2(\domain)\to\R$ and let $\l \in \R$. 
If $F$ is differentiable on $\sP_2(\domain)$ then the following statements are equivalent.
 \begin{enumerate}[itemsep=2pt,topsep=3pt]
    \item $F$ is $\l$-geodesically convex.
    \item $F$ is $\l$-convex along generalised geodesics.
    \item $F$ is $\l$-convex along acceleration-free curves.
 \end{enumerate}
\end{theorem}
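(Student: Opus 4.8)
My plan is to run the cycle of implications $(3)\Rightarrow(2)\Rightarrow(1)\Rightarrow(3)$, of which only the last carries content. The first two are mere inclusions of the underlying curve classes: every generalised geodesic is in particular an acceleration-free curve (with the same associated squared length $\int|x_2-x_1|^2\,d\s$), and every geodesic between $\mu_0$ and $\mu_1$ is one of the generalised geodesics based at $\mu_0$ (again with matching squared length $W_2^2(\mu_0,\mu_1)$). Hence $(3)\Rightarrow(2)$ and $(2)\Rightarrow(1)$ are immediate, and only $(1)\Rightarrow(3)$ requires work.

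So suppose $F$ is differentiable and $\l$-geodesically convex, and fix an acceleration-free curve, written as $\mu_t=(T_t)_\#\s$ with $\s\in\sP_2(\domain\times\domain)$ and $T_t(x_1,x_2):=(1-t)x_1+tx_2$; write $z=(x_1,x_2)$ for the generic point of $\domain\times\domain$, put $g(t):=F(\mu_t)$ and $c_\s:=\int|x_2-x_1|^2\,d\s$, and recall that the goal is $g(\th)\le(1-\th)g(0)+\th g(1)-\tfrac{\l}{2}c_\s\,\th(1-\th)$ for every $\th\in[0,1]$. The argument then rests on the first-order inequality
\[
 F(\rho_1)\ \ge\ F(\rho_0)+\int_{\domain\times\domain}\langle\nabla_{\!W}F(\rho_0)(x),\,y-x\rangle\,d\g(x,y)+\frac{\l}{2}\int_{\domain\times\domain}|x-y|^2\,d\g(x,y),
\]
claimed for \emph{every} coupling $\g$ of $\rho_0$ with $\rho_1$, where $\nabla_{\!W}F(\rho_0)\in L^2(\rho_0;\domain)$ is the Wasserstein gradient; call this $(\star)$. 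Granting $(\star)$, the rest is short. Because $F$ is differentiable at \emph{every} measure, $\nabla_{\!W}F(\mu_\th)$ is defined for all $\th\in(0,1)$, so $(\star)$ may be applied with $(\rho_0,\rho_1,\g)=\bigl(\mu_\th,\mu_0,(T_\th,T_0)_\#\s\bigr)$ and with $(\rho_0,\rho_1,\g)=\bigl(\mu_\th,\mu_1,(T_\th,T_1)_\#\s\bigr)$; using $T_0z-T_\th z=-\th(x_2-x_1)$ and $T_1z-T_\th z=(1-\th)(x_2-x_1)$, together with $\int|T_0z-T_\th z|^2\,d\s=\th^2 c_\s$ and $\int|T_1z-T_\th z|^2\,d\s=(1-\th)^2 c_\s$, these two instances become
\[
 g(0)\ \ge\ g(\th)-\th\,\Xi(\th)+\tfrac{\l}{2}\th^2 c_\s,\qquad g(1)\ \ge\ g(\th)+(1-\th)\,\Xi(\th)+\tfrac{\l}{2}(1-\th)^2 c_\s,
\]
where $\Xi(\th):=\int\langle\nabla_{\!W}F(\mu_\th)(T_\th z),\,x_2-x_1\rangle\,d\s(z)$; forming $(1-\th)\times(\text{first})+\th\times(\text{second})$ the two occurrences of $\Xi(\th)$ cancel and one is left with exactly the target inequality. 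This route sidesteps any discussion of $g'$ or of the regularity of $g$: the Wasserstein gradient is only ever evaluated at the measures $\mu_\th$, where it exists by hypothesis.

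Everything thus reduces to $(\star)$ for couplings that need not be optimal, and this is both the technical heart and the point where differentiability \emph{at every measure} — as opposed to along geodesics only — is genuinely used. When $\g$ is the optimal plan between $\rho_0$ and $\rho_1$, the last integral in $(\star)$ equals $W_2^2(\rho_0,\rho_1)$ and $(\star)$ is the classical `above the tangent' estimate for $\l$-geodesically convex differentiable functions; moreover the convex-combination argument of the previous paragraph, run with geodesic sub-arcs in place of the curves $(T_\th,\cdot)_\#\s$, shows that this restricted form of $(\star)$ \emph{implies} $\l$-geodesic convexity, while the reverse implication is standard (differentiate $F$ along the geodesic from $\rho_0$ to $\rho_1$ at its starting point and invoke $\l$-convexity, cf.\ \cite{Ambrosio2008}). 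The content of the theorem is therefore precisely the assertion that the above-tangent estimate, known for optimal couplings, persists for \emph{every} coupling — including the couplings $(T_\th,T_0)_\#\s$ and $(T_\th,T_1)_\#\s$ extracted from an acceleration-free curve, which carry no optimality whatsoever. This persistence is a first-order characterisation of $\l$-geodesic convexity — equivalently a lower bound on the Wasserstein Hessian regarded as a quadratic form on \emph{plans} rather than on the tangent space, the relevant point being that along an acceleration-free curve the second derivative of $g$ (where it exists) is this Hessian applied to the driving plan, whose squared length is $c_\s$ even at times where the curve's metric speed is strictly smaller — and establishing it, i.e.\ upgrading the above-tangent estimate from optimal to arbitrary couplings, is where I expect the main obstacle to be. That differentiability at every measure cannot be weakened here is shown by Boltzmann's entropy, which is $0$-geodesically convex but is \emph{not} convex along the acceleration-free curve $t\mapsto\mathrm{law}\bigl((1-t)U+tW\bigr)$ with $U,W$ independent and uniform on $[0,1]$ (its two endpoints have zero entropy, while its midpoint — a triangular law — has strictly positive entropy), and which, consistently with the theorem, is not finite, let alone differentiable, at every measure.
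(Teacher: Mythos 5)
There is a genuine gap, and you have located it yourself: everything in your argument hinges on the inequality you call $(\star)$ — the ``above the tangent'' estimate along an \emph{arbitrary} coupling $\g\in\G(\rho_0,\rho_1)$ — and you never prove it, remarking only that upgrading it from optimal to arbitrary couplings ``is where I expect the main obstacle to be.'' But for a differentiable $F$, the validity of $(\star)$ for every coupling is essentially \emph{equivalent} to statement (3) (it is the first-order form of acceleration-free convexity, i.e.\ the $L$-convexity/displacement-monotonicity condition mentioned in the introduction), so your reduction leaves the entire content of the implication $(1)\Rightarrow(3)$ unproved. Nor does $(\star)$ follow from $\l$-geodesic convexity by any soft argument: the linear term $\int\langle\nabla_{w}F(\rho_0)(x),y-x\rangle\,d\g$ changes with the coupling, so knowing the estimate for the optimal plan says nothing directly about a non-optimal plan such as $(T_\th,T_0)_{\#}\s$; indeed Example \ref{counterexample} shows that a merely continuous geodesically convex function can violate acceleration-free convexity, so any proof of $(\star)$ must use differentiability in an essential, quantitative way, which your proposal does not do. (The surrounding convex-combination computation deducing the convexity inequality from $(\star)$, and the two trivial implications, are fine.)

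For comparison, the paper closes exactly this gap by a different route: it first reduces, via continuity and density of discrete measures (Lemma \ref{empiricalapproximation}), to acceleration-free curves between measures in $\sP_2^d(\domain)$; it then shows (Lemma \ref{onesidedgeodesic}) that such a curve is, after rescaling, a genuine geodesic on sufficiently small one-sided subintervals $[s,s+\e]$ and $[s-\d,s]$ of every interior time, so geodesic convexity makes $t\mapsto F(\m_t)$ convex on each such piece; finally, differentiability of $F$ gives differentiability of $t\mapsto F(\m_t)$ (Lemma \ref{accelerationfreederivative}), which rules out non-convex cusps when the pieces are glued (Lemma \ref{convexjoininglemma}), yielding convexity on all of $[0,1]$ and hence $(1)\Rightarrow(3)$. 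If you want to salvage your plan, you would need to supply a proof of $(\star)$ for arbitrary couplings — and the natural way to do so is precisely this discrete/one-sided-geodesic analysis, so the hard work cannot be avoided by the tangent-line bookkeeping alone.
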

Theorem \ref{geodesicequivalencetheorem} introduces the notion of \emph{acceleration-free curves} (cf. Definition \ref{accelerationfree}) which are, heuristically, curves of probability measures which describe the evolution of a density of particles for which the path of each particle is acceleration-free.
For continuously differentiable functions on the $2$-Wasserstein space, the notion of convexity along acceleration-free curves coincides with the first-order notions of \emph{$L$-convexity} and \emph{displacement monotonicity}, introduced by Carmona and Delarue (cf. \cite[Definition 5.70]{CarmonaDelarue}) and Ahuja (cf. \cite[Equation 4.3]{Saran}) respectively. The latter notion of \emph{displacement monotonicity} has seen particular success in the study of mean field games (see Ahuja \cite{Saran}; Gangbo et al. \cite{mouzhang}; M\'esz\'aros and Mou \cite{meszaros2022mean}; Gangbo and M\'esz\'aros \cite{Gangbo2022}) and suggests that the further study of acceleration-free convexity may be applicable in this area.

Since the set of acceleration-free curves includes every generalised geodesic, the notion of convexity along acceleration-free curves is stronger still than the notion of convexity along generalised geodesics. Consequently, in proving Theorem \ref{geodesicequivalencetheorem}, our main focus is to show that every geodesically convex, differentiable function is also convex along acceleration-free curves. As we demonstrate in Example \ref{counterexample}, the notions of geodesic convexity and acceleration-free convexity do not generally coincide, even if we assume our functions to be continuous. Consequently, the assumption of differentiability forms a part of our analysis.

\subsection{Strategy of Proof - Theorem \ref{geodesicequivalencetheorem}.}\label{subsec:one}
In order to clearly understand acceleration-free curves and their associated notion of convexity, our analysis focuses on acceleration-free curves between discrete measures. Firstly, this is because such curves may be viewed, at the finite-dimensional level, as a collection of straight-line paths between a finite collection of points in $\domain$. Secondly, we choose discrete measures because the set of discrete measures on $\domain$ is dense in $(\sP_2(\domain), W_2)$. 

Consider $[0,1]\ni t\mapsto \m_t$, an acceleration-free curve between discrete measures. Since the measure $\m_s$ is also discrete for all $s\in(0,1)$, we may envision the curve $[0,1]\ni t\mapsto \m_t$ as a collection of straight-line paths between a finite collection of points in $\domain$. Moreover, given $s\in (0,1)$, there is no `crossing of mass' between the underlying particles on a small interval on either side of $s$ (even though there may be `crossing of mass' when $t=s$ itself). As we show in Lemma \ref{onesidedgeodesic}, this behaviour means that, for every $s\in (0,1)$, there exist $\varepsilon, \delta > 0 $ such that the curves $[s,s+\varepsilon]\ni t\mapsto \m_t$ and $[s-\delta,s]\ni t\mapsto \m_t$ define geodesics. Consequently, if a function $F\co\sP_2(\domain)\to\R$ is assumed to be geodesically convex, then the map $ t\mapsto F(\m_t)$ must be convex on the intervals $[s,s+\varepsilon]$ and $[s-\delta,s]$. 

In general, convexity on the intervals $[s,s+\varepsilon]$ and $[s-\delta,s]$ does not mean that the map $t\mapsto F(\m_t)$ is convex on $[0,1]$. This is because joining the two intervals $[s-\varepsilon,s]$ and $[s,s+\varepsilon]$ may create a non-convex `cusp' at $t=s$. In order to circumvent the existence of these cusps, we show in Lemma \ref{convexjoininglemma} that no cusps can exist when $[0,1]\ni t\mapsto F(\m_t)$ is differentiable. We also show, in Lemma \ref{accelerationfreederivative}, that $[0,1]\ni t\mapsto F(\m_t)$ is differentiable whenever $F$ is differentiable on $\sP_2(\domain)$. 

Subsequently, in the Proof of Theorem \ref{geodesicequivalencetheorem}, we assume that $F$ is differentiable and $\l$-geodesically convex. As a consequence of the Lemmas \ref{onesidedgeodesic}, \ref{accelerationfreederivative} and \ref{convexjoininglemma}, it follows that $F$ is $\l$-convex along any acceleration-free curve between discrete measures. Moreover, since $F$ is continuous, the convexity criteria, Lemma \ref{empiricalapproximation}, states that $F$ must also be $\l$-convex along any acceleration-free curve.

\subsection{Higher Order Convexity Criteria.} Supplementary to our first result, we present Theorems \ref{convexityone} and \ref{convexitytwo}. These theorems characterise geodesic semi-convexity for differentiable and twice differentiable functions on $\sP_2(\domain)$ respectively. Moreover, as a consequence of Theorem \ref{geodesicequivalencetheorem}, these results also characterise semi-convexity along generalised geodesics and acceleration-free curves. Furthermore, whilst we only address geodesic semi-convexity in this manuscript, we expect that Theorems \ref{convexityone} and \ref{convexitytwo} may be suitably extended to characterise the notion of geodesic $\omega$-convexity introduced by Craig in \cite{Craig2017}.

\begin{theorem}\label{convexityone}
    Let $F\co\sP_2(\domain)\to\R$ and let $\l \in \R$. If $F$ is differentiable on $\sP_2(\domain)$ then $F$ is $\l$-geodesically convex if and only if, for all $\m_1,\m_2$ and all $\g \in \G_o(\m_1,\m_2)$, the following inequality holds. 
\begin{equation}\label{displacementmonotone}
    \int_{{(\domain)}^2}(\nabla_wF[\m_2](x_2)-\nabla_wF[\m_1](x_1))\cdot(x_2-x_1) \ d\g(x_1,x_2)\geq \l W_{2}^2(\m_1,\m_2).
\end{equation}
\end{theorem}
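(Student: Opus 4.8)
The plan is to prove both implications by working along a single geodesic and relating the first-order inequality \eqref{displacementmonotone} to the convexity of the one-dimensional function $t\mapsto F(\mu_t)$. Recall that any constant-speed geodesic $[0,1]\ni t\mapsto\mu_t$ from $\mu_1$ to $\mu_2$ is of the form $\mu_t=((1-t)\pi^1+t\pi^2)_\#\gamma$ for some $\gamma\in\G_o(\mu_1,\mu_2)$, and that for $s<t$ in $[0,1]$ the plan $\gamma_{s,t}:=((1-s)\pi^1+s\pi^2,(1-t)\pi^1+t\pi^2)_\#\gamma$ is optimal between $\mu_s$ and $\mu_t$ with $W_2^2(\mu_s,\mu_t)=(t-s)^2W_2^2(\mu_1,\mu_2)$. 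The key computational input is the chain rule for the Wasserstein derivative: for $F$ differentiable on $\sP_2(\domain)$, one has $\frac{d}{dt}F(\mu_t)=\int_{\domain}\nabla_w F[\mu_t](x)\cdot v_t(x)\,d\mu_t(x)$, where $v_t$ is the velocity field of the geodesic. Along the displacement-interpolation geodesic the velocity at the point $(1-t)x_1+tx_2$ is exactly $x_2-x_1$, so that
\begin{equation*}
    \frac{d}{dt}F(\mu_t)=\int_{(\domain)^2}\nabla_w F[\mu_t]\big((1-t)x_1+tx_2\big)\cdot(x_2-x_1)\,d\gamma(x_1,x_2).
\end{equation*}
I would establish this identity first (it should follow from Lemma \ref{accelerationfreederivative}, or from the definition of differentiability on $\sP_2(\domain)$ applied to the curve $\mu_t$, since displacement interpolations are acceleration-free).

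For the direction \eqref{displacementmonotone} $\Rightarrow$ $\lambda$-geodesic convexity: fix $\gamma\in\G_o(\mu_1,\mu_2)$ and the associated geodesic $\mu_t$. For $0\le s<t\le 1$, the restriction $[s,t]\ni r\mapsto\mu_r$ is (a reparametrisation of) a geodesic, and $\gamma_{s,t}\in\G_o(\mu_s,\mu_t)$. Apply \eqref{displacementmonotone} to the pair $(\mu_s,\mu_t)$ with the plan $\gamma_{s,t}$: writing $p_r(x_1,x_2)=(1-r)x_1+rx_2$, the integrand becomes $(\nabla_wF[\mu_t](p_t)-\nabla_wF[\mu_s](p_s))\cdot(p_t-p_s)=(t-s)(\nabla_wF[\mu_t](p_t)-\nabla_wF[\mu_s](p_s))\cdot(x_2-x_1)$, and the right-hand side is $\lambda(t-s)^2W_2^2(\mu_1,\mu_2)$. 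Dividing by $t-s>0$ and recognising the left-hand side, via the chain-rule identity above, as $g'(t)-g'(s)$ where $g(r):=F(\mu_r)$, we obtain $g'(t)-g'(s)\ge\lambda(t-s)W_2^2(\mu_1,\mu_2)$ for all $s<t$. This is precisely the statement that $r\mapsto g(r)-\tfrac{\lambda}{2}r^2W_2^2(\mu_1,\mu_2)$ has nondecreasing derivative, hence is convex, i.e. $F$ is $\lambda$-convex along this geodesic. Since every geodesic arises this way, $F$ is $\lambda$-geodesically convex.

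For the converse: assume $F$ is $\lambda$-geodesically convex and fix $\mu_1,\mu_2$ and $\gamma\in\G_o(\mu_1,\mu_2)$. Then $g(r)-\tfrac{\lambda}{2}r^2W_2^2(\mu_1,\mu_2)$ is convex on $[0,1]$, and by the chain rule $g$ is differentiable, so the difference quotient inequality gives $g'(1)-g'(0)\ge\lambda W_2^2(\mu_1,\mu_2)$ (taking $s\to 0$, $t\to 1$; more carefully, convexity of $h(r):=g(r)-\tfrac{\lambda}{2}r^2W_2^2$ gives $h'(1)\ge h'(0)$). Evaluating $g'(0)$ and $g'(1)$ with the chain-rule identity at $t=0$ (where $p_0=x_1$) and $t=1$ (where $p_1=x_2$) yields exactly $\int_{(\domain)^2}(\nabla_wF[\mu_2](x_2)-\nabla_wF[\mu_1](x_1))\cdot(x_2-x_1)\,d\gamma\ge\lambda W_2^2(\mu_1,\mu_2)$, which is \eqref{displacementmonotone}.

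The main obstacle is the rigorous justification of the chain-rule identity for $\frac{d}{dt}F(\mu_t)$ along displacement-interpolation geodesics and, in particular, the one-sided versions at the endpoints $t=0$ and $t=1$, where one needs differentiability of $F$ at $\mu_1$ and $\mu_2$ along the half-curve together with the identification of the relevant tangent vector as $(\pi^2-\pi^1)_\#$-pushforward of $\gamma$. I would handle this by invoking Lemma \ref{accelerationfreederivative} (noting displacement interpolations are acceleration-free curves) to guarantee differentiability of $t\mapsto F(\mu_t)$ on $[0,1]$, and by unwinding the definition of the Wasserstein gradient to pin down the derivative as the stated integral; the interior estimate for $s,t\in(0,1)$ is then routine and the endpoint cases follow by continuity of $r\mapsto\nabla_wF[\mu_r]$ in the appropriate sense, or alternatively by applying the interior inequality and passing to the limit $s\downarrow0$, $t\uparrow1$ using continuity of both sides of \eqref{displacementmonotone} in the measures.
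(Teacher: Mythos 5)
Your proposal is correct and follows essentially the same route as the paper: the identity you call the chain rule is exactly the paper's Lemma \ref{firstderivative} (valid along geodesics between arbitrary measures, including one-sided derivatives at the endpoints — this, rather than Lemma \ref{accelerationfreederivative}, which requires discrete endpoints, is the tool to cite), and the paper likewise converts \eqref{displacementmonotone} applied to the restricted geodesics $\mu_s\to\mu_r$ into monotonicity of $\frac{d}{dt}F(\mu_t)$ via Lemma \ref{monotonederivative}. The only small point you gloss over is that in the converse direction the definition of $\lambda$-geodesic convexity guarantees the inequality along \emph{some} geodesic, so to get convexity of $t\mapsto F(\mu_t)$ for the geodesic induced by your arbitrarily fixed $\gamma$ you should invoke Remark \ref{everygeodesicremark} (continuity of $F$ plus uniqueness of interior optimal plans), as the paper does.
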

\begin{theorem}\label{convexitytwo}
     Let $F\co\sP_2(\domain)\to\R$ and let $\l \in \R$. If $F$ is twice differentiable on $\sP_2(\domain)$ then $F$ is $\l$-geodesically convex if and only if, for all $\m\in\sP_2(\domain)$ and all $\z \in T_\m\sP_2(\domain)$, the following inequality holds.
\begin{equation}\label{displacementmonotonetwo}
HessF[\m](\z,\z) \geq \l \| \z \| _{L^2(\m)}^2 
\end{equation}
\end{theorem}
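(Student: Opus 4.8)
\textbf{Proof proposal for Theorem \ref{convexitytwo}.}

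The plan is to reduce the statement to the classical fact that a $C^2$ function on an interval is $\l$-convex if and only if its second derivative is bounded below by $\l$, applied to $t\mapsto F(\g_t)$ along geodesics, and then to identify the one-dimensional second derivative with the Wasserstein Hessian. By Theorem \ref{geodesicequivalencetheorem} we already know that $\l$-geodesic convexity, $\l$-convexity along generalised geodesics, and $\l$-convexity along acceleration-free curves coincide for differentiable $F$; since $F$ is assumed twice differentiable here, it is in particular differentiable, so it suffices to work with geodesics. The key structural input is that for $\m\in\sP_2(\domain)$ and $\z\in T_\m\sP_2(\domain)$ (the tangent cone, i.e.\ the $L^2(\m)$-closure of gradients of $C_c^\infty$ functions), the map $t\mapsto (\mathrm{id}+t\z)_\#\m$ is, for $|t|$ small, a constant-speed geodesic emanating from $\m$ with initial velocity $\z$, and that conversely every geodesic through $\m$ arises this way. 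This is exactly the regular-tangent-vector description of geodesics in $\sP_2(\domain)$.

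First I would recall the definition of twice differentiability and of $HessF[\m]$ as used in the paper: along the curve $\m_t^\z:=(\mathrm{id}+t\z)_\#\m$, differentiability of $F$ gives $\frac{d}{dt}F(\m_t^\z)=\int \nabla_wF[\m_t^\z]\cdot \z_t\,d\m_t^\z$ for an appropriate transported velocity $\z_t$, and twice differentiability gives that $t\mapsto F(\m_t^\z)$ is $C^2$ with $\frac{d^2}{dt^2}\big|_{t=0}F(\m_t^\z)=HessF[\m](\z,\z)$ (this is the defining property of the Hessian form; I would cite the paper's own Definition of twice differentiability rather than re-derive it). For the forward direction, assume $F$ is $\l$-geodesically convex. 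Fix $\m$ and $\z\in T_\m\sP_2(\domain)$. For $|t|$ small enough the curve $t\mapsto\m_t^\z$ is a geodesic, so $g(t):=F(\m_t^\z)$ is $\l$-convex on a neighbourhood of $0$, i.e.\ $t\mapsto g(t)-\frac{\l}{2}t^2\|\z\|_{L^2(\m)}^2$ is convex there (using that $W_2(\m_s^\z,\m_t^\z)=|s-t|\,\|\z\|_{L^2(\m)}$ for small $s,t$). Since $g\in C^2$, this forces $g''(0)\ge \l\|\z\|_{L^2(\m)}^2$, which is precisely \eqref{displacementmonotonetwo}.

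For the converse, assume \eqref{displacementmonotonetwo} holds for all $\m$ and all $\z\in T_\m\sP_2(\domain)$, and let $[0,1]\ni t\mapsto\g_t$ be an arbitrary (constant-speed) geodesic with $W_2(\g_0,\g_1)=L$. At each interior time $s\in(0,1)$ the geodesic has a well-defined velocity field $v_s\in T_{\g_s}\sP_2(\domain)$ with $\|v_s\|_{L^2(\g_s)}=L$, and locally around $s$ the geodesic coincides with $r\mapsto(\mathrm{id}+(r-s)v_s)_\#\g_s$; hence by twice differentiability of $F$ the function $h(t):=F(\g_t)$ is twice differentiable at $s$ with $h''(s)=HessF[\g_s](v_s,v_s)\ge\l L^2=\l W_2^2(\g_0,\g_1)$. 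Thus $h\in C^2(0,1)$ (continuity of $h''$ coming from the twice-differentiability hypothesis, with one-sided versions at the endpoints via Lemma \ref{onesidedgeodesic}) with $h''\ge \l L^2$ throughout, so $t\mapsto h(t)-\frac{\l}{2}L^2t^2$ is convex on $[0,1]$; unpacking, $F(\g_t)\le(1-t)F(\g_0)+tF(\g_1)-\frac{\l}{2}t(1-t)W_2^2(\g_0,\g_1)$, i.e.\ $F$ is $\l$-geodesically convex.

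The main obstacle is the bookkeeping around regularity and the interior-time argument: one must be careful that the geodesic's velocity field $v_s$ at an interior time genuinely lies in the tangent space $T_{\g_s}\sP_2(\domain)$ used in \eqref{displacementmonotonetwo}, that the local representation $\g_r=(\mathrm{id}+(r-s)v_s)_\#\g_s$ is valid for $r$ near $s$ (so that the paper's notion of the Hessian along this specific curve agrees with $HessF[\g_s](v_s,v_s)$), and that ``twice differentiable'' as defined in the paper indeed delivers a continuous second derivative of $t\mapsto F(\g_t)$ so that the elementary $C^2$ convexity criterion applies — including the one-sided statements at $t=0,1$, which is where Lemma \ref{onesidedgeodesic} and the argument pattern from the proof of Theorem \ref{geodesicequivalencetheorem} get reused. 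Once these identifications are in place, the analytic core is just the one-variable calculus fact $h''\ge\l \iff h-\tfrac{\l}{2}(\cdot)^2$ convex.
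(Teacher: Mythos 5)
There is a genuine gap, and it sits exactly at the point you label ``the defining property of the Hessian form''. Under the paper's Definition \ref{Hessiandefn}, $HessF[\mu]$ is \emph{not} defined as the second derivative of $t\mapsto F((\mathrm{id}+t\z)_{\#}\mu)$: it is defined algebraically by $\bar{H}essF[\mu](\z_1,\z_2)=\z_1\cdot(\z_2\cdot F[\mu])-(\nabla\z_2\,\z_1)\cdot F[\mu]$ on $\nabla C_c^\infty(\domain)\times\nabla C_c^\infty(\domain)$ and then extended to $T_\mu\sP_2(\domain)$ by boundedness/density. The identification of this object with $\frac{d^2}{dt^2}F(\mu_t)$ along a geodesic is precisely the nontrivial analytic content of the paper's argument, carried out in Lemma \ref{secondderivative}: it requires the geodesic to be generated by a potential $\varphi\in C_c^\infty(\domain)$ that is $(-1)$-convex near the support, so that $g_t=\mathrm{id}+t\nabla\varphi$ is a diffeomorphism, the transported velocity $\nabla\varphi\circ g_t^{-1}$ can be shown (inverse function theorem) to lie in $\nabla C_c^\infty$ of a neighbourhood of the support, and the correction term in $\bar{H}ess$ is matched with the curve's acceleration via the identity $\partial_t\nabla f_t+\nabla^2 f_t\,\nabla f_t=0$. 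Your proposal assumes this identification as part of the hypothesis ``twice differentiable'', so the core of the theorem is never proved.

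Concretely, in your converse direction the interior velocity $v_s$ of an arbitrary geodesic is in general not a compactly supported smooth gradient; $HessF[\gamma_s](v_s,v_s)$ is then only an abstract limit of values on smooth gradients, and nothing in Definition \ref{Hessiandefn} guarantees that $h(t)=F(\gamma_t)$ is twice differentiable at $s$, that $h''(s)$ equals this extended value, or that $h''$ is continuous (your appeal to Lemma \ref{onesidedgeodesic}, which concerns discrete measures, does not help at the endpoints either). The same issue affects your forward direction for general $\z\in T_\mu\sP_2(\domain)$ (and the claim that $(\mathrm{id}+t\z)_{\#}\mu$ is a geodesic for small $t$ is itself a nontrivial fact needing citation). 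The paper circumvents all of this by proving the forward inequality only for $\z=\nabla\varphi$ with $c^{-1}\varphi+\tfrac12|x|^2$ convex (via Lemma \ref{secondderivative}) and then extending to $T_\mu\sP_2(\domain)$ by $L^2$-continuity of the Hessian form, and by proving the converse first on $\sP_2^{rc}(\domain)$: approximate the optimal map by smooth $(-1)$-convex potentials (Lemma \ref{approximationlemma}), get $\l$-convexity of $t\mapsto F(\mu_t^n)$ along the approximating geodesics from Lemma \ref{secondderivative}, pass to the limit using continuity of $F$, and finally upgrade from $\sP_2^{rc}(\domain)$ to $\sP_2(\domain)$ with Lemma \ref{convexitycriterion}. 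These approximation steps are exactly what is missing from your proposal; without them, the argument does not go through. (The invocation of Theorem \ref{geodesicequivalencetheorem} is harmless but unnecessary here.)
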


 Theorems \ref{convexityone} and \ref{convexitytwo} are proven in \hyperref[sec:two]{Section Four} and may be seen as analogues of the respective inequalities that characterise semi-convexity for differentiable and twice differentiable functions on finite-dimensional space. Although the $2$-Wasserstein space is not a smooth manifold, the formal Riemannian calculus, proposed by Otto in \cite{Otto2001} and further developed by Otto and Villani in \cite{Otto2000}, provides a well-established notion of differentiability for functions defined on $\sP_2(\domain)$. For the notions of gradient and Hessian utilised in this manuscript, we refer to the theory of differential $1$-forms and $2$-forms subsequently developed by Gangbo et al. in \cite{Gangbo2008}. 
In particular, the work of Gangbo and Chow (see \cite{Chow2019}) elucidates that the differential 2-form proposed in \cite{Gangbo2008} defines a Hessian on the $2$-Wasserstein space and that this Hessian is consistent with the Levi-Civita connection proposed by Gigli in \cite{Gigli2009} and Lott in \cite{Lott2008}. 

Whilst the current theory is lacunary in a complete presentation of the first and second-order convexity criteria presented above, there are well-established first-order-convexity inequalities such as the aforementioned notions of \emph{L-convexity} and \emph{displacement monotonicity}. 
Furthermore, Lanzetti et al. show that, when a function $F\co\sP_2(\domain)\to\R$ is differentiable and geodesically semi-convex, Equation \eqref{displacementmonotone} holds for all $\m_1,\m_2 \in \sP_2(\domain)$ and all $\g \in \G_o(\m_1,\m_2)$ (cf. \cite[Proposition 2.8]{lanzetti2022}). Comparatively, in Theorem \ref{convexityone}, we posit that the converse also holds.
 
In contrast to the first-order convexity criteria, second-order convexity inequalities, such as \eqref{displacementmonotonetwo}, are far less well-established. In particular, it seems there is currently no description of geodesic semi-convexity purely in terms of the Hessian presented in \cite[Definition 3.1]{Chow2019} (see also Definition \ref{Hessiandefn}). In addition, as we explain in the following subsection, the nature of parallel transport on $2$-Wasserstein space means that establishing a second-order characterisation of geodesic convexity requires particular care.

\subsection{Strategy of Proof - Theorem \ref{convexitytwo}.}\label{subsec:two}
As seen in Definition \ref{Hessiandefn}, the Wasserstein Hessian is defined via its extension from $\nabla C_c^\infty(\domain)\times \nabla C_c^\infty(\domain)$, however, it is also possible to calculate the Hessian directly via the covariant derivative proposed in \cite{Gigli2009} and \cite{Lott2008}. 
Whilst it is expected that one could employ this latter method to derive a second-order geodesic convexity criterion, there are some difficulties in this approach. In particular, as established in \cite[Example 5.20]{Gigli2009}, parallel transport does not exist everywhere along some Wasserstein geodesics. In recognition of this difficulty, we instead choose to establish a second-order geodesic convexity criterion via an alternative argument which we describe as follows. 

Firstly, let $[0,1]\ni t \mapsto \m_t \in \sP_2(\domain)$ be a geodesic between measures $\m,\n \in \sP_2(\domain)$ and assume that there exists $\varphi \in C_c^\infty(\domain)$ such that $\m_t = (id + t\nabla\varphi)_{\#}\m$ for all $t \in [0,1]$. 
In Lemma \ref{secondderivative}, we show (under some additional convexity assumption on $\varphi$) that the second derivative of the map $t\mapsto F(\m_t)$ may be written explicitly in terms of the Wasserstein Hessian. Consequently, convexity along any such curve $t\mapsto\m_t$ may be characterised by Inequality \eqref{displacementmonotonetwo}.  

In order to extend this characterisation to a larger class of geodesics we subsequently introduce the set $\sP^{rc}_2(\domain)$. This is the subset of $\sP_2(\domain)$ containing measures which are absolutely continuous with respect to the $d$-dimensional Lebesgue measure and have compact support. In particular, in Lemma \ref{approximationlemma}, we show that, if $\m,\n \in \sP_2^{rc}(\domain)$, then the optimal map between any $\m$ and $\n$ can be approximated by a sequence of smooth optimal maps $(T_n)_{n\in\N}$ for which $(T_n -id) \in \nabla C_c^\infty(\domain)$. The proof of this result is inspired by the mollifying argument used to prove \cite[Proposition 8.5.2]{Ambrosio2008}.

As a consequence of Lemmas \ref{secondderivative} and \ref{approximationlemma}, it follows that Inequality \eqref{displacementmonotonetwo} characterises geodesic convexity on $\sP_2^{rc}(\domain)$. Moreover, as a consequence of Lemma \ref{convexitycriterion}, Inequality \eqref{displacementmonotonetwo} characterises geodesic convexity on the whole space $\sP_2(\domain)$. Lemma \ref{convexitycriterion} states that a continuous function is $\l$-geodesically convex if and only if its restriction to $\sP_2^{rc}(\domain)$ is $\l$-geodesically convex and is inspired by the convexity criterion derived in \cite[Proposition 9.1.3]{Ambrosio2008}.

\subsection{A Note.}
After the initial submission of this manuscript to the arXiv, the author was made aware of the work of Cavagnari, Savar\'e and Sodini \cite{cavagnari} which was developed in parallel with this manuscript. The results of \cite{cavagnari} present several connections with \hyperref[sec:one]{Section Three} of this manuscript. In particular, the authors introduce the notion of \emph{total convexity} and this corresponds to our notion of \emph{convexity along acceleration-free curves}. 

In addition to this, it is shown in \cite[Theorem 9.1; Remark 9.2]{cavagnari} that, if $d\geq 2$, then the assumption of differentiability in Theorem \ref{geodesicequivalencetheorem} may be relaxed to the assumption of continuity. To prove this result, the authors of \cite{cavagnari} also find the space of discrete measures crucial to their argument and, moreover, the findings of Lemma \ref{onesidedgeodesic} may similarly be found in \cite[Theorem 6.2]{cavagnari}.
\section{Notation and Preliminaries}
\subsection{Notation.}
The notation introduced here is largely consistent with the notation used in \cite{Ambrosio2008}. We refer the reader to \cite{Ambrosio2008} for a more detailed description of their properties. 

The set $\sP(\domain)$ denotes the space of Borel probability measures over $\domain$ and the set $\sP_2(\domain)$ denotes the set of $\m \in \sP(\domain)$ with bounded second moment. 
For $\m,\n \in \sP_2(\domain)$, the set of transport plans $\G(\m,\n)$ denotes the set of $\g \in \sP_2(\domain\times\domain)$ with first marginal $\m$ and second marginal $\n$. 
Subsequently, we define the $2$-Wasserstein distance $W_2 \colon \sP_2(\domain)\times\sP_2(\domain) \to \R$.
\[
W_2(\m,\n) :=\inf\bigg\{\bigg(\int_{(\domain)^2}|x_1-x_2|^2 \ d\g (x_1,x_2)\bigg)^\frac{1}{2}\bigg| \ \g \in \G(\m,\n) \bigg\}.
\]
The pair $(\sP_2(\domain), W_2)$ defines a metric space which we refer to as $2$-Wasserstein space and, throughout this manuscript, we will assume that $\sP_2(\domain)$ is endowed with the $W_2$ distance. 

The set of optimal transport plans $\G_o(\m,\n)$ is defined as follows.
\[
\G_o(\m,\n) := \bigg\{\g\in \G(\m,\n) \bigg| \int_{(\domain)^2}|x_1-x_2|^2 \ d\g (x_1,x_2) = W_2^2(\m,\n) \bigg\} .
\]
Given $\m \in \sP_2(\domain)$, we say that $\m$ is a discrete measure if there exists $n \in \N$ and $a_1,\dots,  a_n\in [0,1]$ such that $\sum_{i=1}^n a_i = 1$ and $x_1,\dots, x_n \in \domain$ such that $\m = \sum_{i=1}^n a_i\delta_{x_i}$. We let $\sP_2^{d}(\domain)$ denote the set of discrete measures on $\domain$. The set $\sP_2^r(\domain)$ denotes the set of $\m \in \sP_2(\domain)$ that are absolutely continuous with respect to the $d$-dimensional Lebesgue measure $\mathscr{L}^d$ and $\sP_2^{rc}(\domain)$ denotes the set of $\m \in \sP_2^r(\domain)$ with compact support. 
For $\m\in \sP(\domain)$, the set $L^2(\m;\domain)$ denotes the Hilbert space of Borel vector fields $\z\co \domain \to\R^d$ satisfying $\int_{\domain} |\z|^2 \ d\m < \infty$.  
We define $\nabla C_c^\infty(\domain) := \{\nabla\varphi \ | \ \varphi \in  C_c^\infty(\domain)\}$ and, for $\m \in \sP_2(\domain)$, the set $T_\m\sP_2(\domain)$ denotes the closure of $\nabla C_c^\infty(\domain)$ in $L^2(\m;\domain)$. 
For ${i,j}\in\{1,2,3\}$ and ${k}\in\{1,2\}$, we define the projection operators $\pi^{i,j}(x_1,x_2,x_3):=(x_i,x_j)$ and $\pi^{k}(x_1,x_2):=x_k$. 
Given a set $A\subset \domain$, its convex hull is denoted $\Hull(A)$. Finally, given a measure $\m\in \sP(\domain)$, its support is denoted $\supp(\m)$ and, given a map $\m$-measurable map $T\co\domain \to \R^n$, the pushforward of $\m$ through $T$ is the measure $T_{\#}\m\in \sP(\R^n)$ defined by $T_{\#}\m(A) := \m(T^{-1}(A))$ for all Borel sets $A \subset \R^n$. 
\subsection{Preliminaries.}\label{preliminaries}
In this subsection, we recall a number of established definitions and results for use throughout the remainder of the manuscript. In particular, since Theorem \ref{geodesicequivalencetheorem} concerns both geodesic convexity and convexity along generalised geodesics, we first recall the definitions of such concepts from \cite[Definition 1.1]{McCann1997} and \cite[Definition 9.2.4]{Ambrosio2008} respectively. Furthermore, we recall from \cite[Lemma 7.2.1]{Ambrosio2008}, a useful lemma concerning the properties of Wasserstein geodesics.
\begin{defn}\label{geodesicconvexity}
We say that a curve $[0,1]\ni t\mapsto\m_t\in \sP_2(\domain)$ is a \emph{geodesic} if there exist $\m_1,\m_2\in\sP_2(\domain)$ and $\g \in \G_o(\m_1,\m_2)$ such that $\m_t = ((1-t)\p^1 + t\p^2)_{\#}\g$ for all $t\in [0,1]$. 
Given $F\co\sP_2(\domain)\to\R$ and $\l \in \R$, we say that $F$ is \emph{$\l$-geodesically convex} if, between every pair of measures $\m_1,\m_2 \in \sP_2(\domain)$, there exists a geodesic $[0,1]\ni t\mapsto\m_t\in \sP_2(\domain)$ such that the following inequality is satisfied for all $t\in[0,1]$. \begin{equation}\label{geodesicconvexityinequality}
    F(\m_t) \leq (1-t)F(\m_1) + tF(\m_2) - \frac{\l}{2}t(1-t) W_2^2(\m_1,\m_2).  
    \end{equation}
\end{defn}

\begin{defn}\label{generalisedgeodesicconvexity}
  Let $F\co\sP_2(\domain)\to\R$ and let $\l \in \R$. We say that $F$ is \emph{$\l$-convex along generalised geodesics} if, for every triplet of measures $\m_1,\m_2,\m_3 \in \sP_2(\domain)$, there exists $\boldsymbol{\m}\in \sP_2((\domain)^3)$ with $\pi^{1,2}_{\#}\boldsymbol{\m} \in \G_o(\m_1,\m_2)$ and $\pi^{1,3}_{\#}\boldsymbol{\m} \in \G_o(\m_1,\m_3)$ such that the curve $[0,1]\ni t\mapsto \m_t:= ((1-t)\p^2 + t\p^3)_{\#}\boldsymbol{\m}$ satisfies the following inequality for all $t\in[0,1]$.
  \[
  F(\m_t) \leq (1-t)F(\m_2) + tF(\m_3) -\frac{\l}{2}t(1-t)\int_{(\domain)^3}|x_2-x_3|^2 \ d\boldsymbol{\m}(x_1,x_2,x_3).
  \]
\end{defn}

\begin{lemma}\label{geodesicremark}
Let $\m_1,\m_2\in \sP_2(\domain)$, let $\g\in \G_o(\m_1,\m_2)$ and let $[0,1]\ni t\mapsto \m_t := ((1-t)\p^1 + t\p^2)_{\#}\g$. The set $\G_o(\m_s,\m_r)$ has a unique element for $\{s,r\}\neq \{0,1\}$. 
In particular, this element is given by $[((1-s)\p^1 + s\p^2),((1-r)\p^1 + r\p^2)]_{\#}\g$. 
\end{lemma}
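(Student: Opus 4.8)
\textbf{Proof proposal for Lemma \ref{geodesicremark}.}

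The plan is to argue that the displacement interpolation $t\mapsto\m_t$ "inherits" optimality of $\g$ along subintervals, and that absolute continuity of a genuine sub-segment forces uniqueness. First I would fix $\{s,r\}\neq\{0,1\}$ with $s<r$ (the case $s>r$ being symmetric), and set $\g_{s,r} := [((1-s)\p^1 + s\p^2),((1-r)\p^1 + r\p^2)]_{\#}\g$, which is clearly a transport plan in $\G(\m_s,\m_r)$. The first key step is to show that $\g_{s,r}$ is \emph{optimal}. This is a standard fact about displacement interpolants: since $\g\in\G_o(\m_1,\m_2)$, its support is cyclically monotone, and the map $(x_1,x_2)\mapsto\big((1-s)x_1+sx_2,\,(1-r)x_1+rx_2\big)$ sends cyclically monotone sets to cyclically monotone sets because the difference of the two output coordinates is $(r-s)(x_2-x_1)$, a positive multiple of the original displacement. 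Hence $\supp(\g_{s,r})$ is cyclically monotone, and by the standard characterisation of optimality (e.g. \cite{Ambrosio2008}, or directly: a plan supported on a cyclically monotone set is optimal), $\g_{s,r}\in\G_o(\m_s,\m_r)$. Alternatively one can invoke \cite[Lemma 7.2.1]{Ambrosio2008} directly, which is presumably exactly the statement being recalled here in a slightly repackaged form.

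The second key step is \emph{uniqueness}. The natural route is to observe that at least one of $\m_s,\m_r$ has extra regularity coming from the interpolation. Concretely, I would use that for $t\in(0,1)$ the plan $((1-t)\p^1+t\p^2)$ pushed forward from $\g$ is such that the "backward" and "forward" maps are well-defined $\m_t$-a.e.; more usefully, interior interpolants $\m_t$ with $t\in(0,1)$ are always "concentrated on a graph" in the sense that there is a unique optimal plan between $\m_0$ and $\m_t$ and between $\m_t$ and $\m_1$, and in fact the map from $\m_t$ is induced by a transport map. Since $s,r$ cannot both be endpoints, at least one of them, say $s$, lies in $(0,1)$ (if $s=0$ then $r\in(0,1)$; treat that case by running time backwards). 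Then $\m_s$ with $s\in(0,1)$ is reached by interpolation, and the classical result (\cite[Lemma 7.2.1]{Ambrosio2008} again, or the no-crossing argument) gives that the optimal plan \emph{from} $\m_s$ to $\m_r$ is unique and induced by the map $(1-s)x_1+sx_2\mapsto(1-r)x_1+sx_2$... — more precisely, one shows any $\tilde\g\in\G_o(\m_s,\m_r)$ must, when glued back along the interpolation to $\m_0$ and $\m_1$, produce an optimal plan in $\G_o(\m_0,\m_1)$, and the geometry of displacement interpolation pins down that this glued plan agrees with $\g$ on the relevant supports, forcing $\tilde\g=\g_{s,r}$.

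The step I expect to be the main obstacle is making the uniqueness argument fully rigorous without circularity. The cleanest implementation: given $\tilde\g\in\G_o(\m_s,\m_r)$, use the gluing lemma to build $\boldsymbol{\sigma}\in\sP_2((\domain)^4)$ with marginals distributing mass along $\m_0,\m_s,\m_r,\m_1$ consistently with $\g$ on the $(0,1)$ coordinates and with $\tilde\g$ on the $(s,r)$ coordinates; then show the induced plan between $\m_0$ and $\m_1$ has cost $\le W_2^2(\m_0,\m_1)$ by a convexity/triangle-type estimate along the straight lines, hence equals $\g$ by optimality and uniqueness of cost-minimisers up to ... — and then unwind to conclude $\tilde\g$ is forced. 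If the interpolation no-crossing property of \cite[Lemma 7.2.1]{Ambrosio2008} is available in the exact form needed (uniqueness of optimal plans between interior interpolants and endpoints, plus the explicit formula), then most of this reduces to bookkeeping, and the lemma follows quickly. I would therefore phrase the proof as a short deduction from \cite[Lemma 7.2.1]{Ambrosio2008}, spelling out only the cyclical-monotonicity computation $(r-s)(x_2-x_1)$ that makes the rescaled plan optimal, and the reduction of the general $\{s,r\}$ case to the case with an interior point.
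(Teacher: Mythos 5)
The paper never proves this lemma: it is recalled verbatim from \cite[Lemma 7.2.1]{Ambrosio2008}, so your closing recommendation --- to phrase everything as a short deduction from that result --- is exactly the paper's route, and on that route there is nothing to object to.

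Your attempted self-contained argument, however, has a genuine gap in the uniqueness step. You propose to glue a competitor $\tilde\g\in\G_o(\m_s,\m_r)$ with the interpolation plans, check that the induced plan between $\m_0$ and $\m_1$ is optimal, and conclude it ``equals $\g$ by optimality and uniqueness of cost-minimisers.'' But uniqueness of the optimal plan between the endpoints $\m_0,\m_1$ is precisely what can fail --- that is the whole reason the lemma excludes $\{s,r\}=\{0,1\}$ --- so optimality of the glued plan does not identify it with $\g$, and even if it did, this alone would not force $\tilde\g=\g_{s,r}$. The missing ingredient is the interior-time no-crossing property: if $(x_1,x_2),(x_1',x_2')\in\supp(\g)$ satisfy $(1-t)x_1+tx_2=(1-t)x_1'+tx_2'$ for some $t\in(0,1)$, then monotonicity of $\supp(\g)$ gives $\langle x_1-x_1',x_2-x_2'\rangle\geq 0$, while the coincidence gives $x_1-x_1'=-\tfrac{t}{1-t}(x_2-x_2')$, forcing both pairs to coincide. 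This is what makes the maps from an interior interpolant $\m_t$ back to the two endpoints well defined $\m_t$-a.e., and it is this rigidity (applied after the gluing/cost-additivity step) that pins down every optimal plan out of $\m_s$ with $s\in(0,1)$; your sketch never uses it, and the sentence trails off exactly where it would be needed.

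Two smaller points. First, your justification of optimality of $\g_{s,r}$ (``the difference of the two output coordinates is $(r-s)(x_2-x_1)$'') is a heuristic, not a proof of cyclical monotonicity of the image; the honest computation expands $\sum_i\langle(1-s)x_i+sy_i,\,b_i-b_{\sigma(i)}\rangle$ into four sums with coefficients $(1-s)(1-r)$, $(1-s)r$, $s(1-r)$, $sr$, each non-negative by cyclical monotonicity of $\supp(\g)$ and by rearrangement, so the conclusion is correct but needs that step (alternatively, the constant-speed/triangle-inequality argument gives optimality immediately). Second, the map you write down, ``$(1-s)x_1+sx_2\mapsto(1-r)x_1+sx_2$,'' has a typo: the target should be $(1-r)x_1+rx_2$.
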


\begin{remark}\label{everygeodesicremark}
If $F\co\sP_2(\domain)\to\R$ is $\l$-geodesically convex then $F$ necessarily satisfies Equation \eqref{geodesicconvexityinequality} for at least one geodesic between every pair of measures in $\sP_2(\domain)$.
However, if $F$ is $\l$-geodesically convex and continuous, then $F$ necessarily satisfies Equation \eqref{geodesicconvexityinequality} for every geodesic between every pair of measures in $\sP_2(\domain)$. 
This is a consequence of Lemma \ref{geodesicremark}.
\end{remark}

In order to establish Theorems \ref{convexityone} and \ref{convexitytwo}, we first require notions of differentiability and twice differentiability on the $2$-Wasserstein space. Subsequently, we recall these concepts from \cite[Definition 4.9]{Gangbo2008} and \cite[Definition 3.1]{Chow2019} respectively.
\begin{defn}\label{differentiability}
Let $F\co\sP_2(\domain)\to\R$. 
Given $\m \in \sP_2(\domain)$ and $\xi \in L^2(\m;\domain)$, we say $\xi \in \partial F[\m]$ if 
\[
F(\n) - F(\m) = \int_{(\domain)^2} \xi(x_1) \cdot (x_2-x_1) \ d\g(x_1,x_2) + o(W_2(\m,\n))
\]
for all $\n \in \sP_2(\domain)$ and all $\g \in \G_o(\m,\n)$. 
If $\partial F[\m]$ is non-empty, then we say that $F$ is differentiable at $\m$ and define the Wasserstein gradient $\nabla_wF[\m]$ to be its element of minimal norm. 
We also define the differential $dF[\m]:L^2(\m;\domain)\to\R$, $dF[\m](\z) := \zeta \cdot F[\m] := \langle \zeta, \nabla_wF[\m]\rangle_{L^2(\m)}.$
\end{defn}

\begin{defn}\label{Hessiandefn}
    Let $F$ be differentiable in a neighbourhood of $\m$ and let $\n \mapsto \z \cdot F(\n)$ be differentiable at $\m$ for all $\zeta \in \nabla C_c^\infty(\domain)$.
    We define $\bar{H}ess F[\m] \co \nabla C_c^\infty(\domain)\times \nabla C_c^\infty(\domain) \to \R$ as follows. 
    \[
    \bar{H}ess F[\m](\z_1,\z_2) := \z_1 \cdot (\z_2 \cdot F[\m]) - (\nabla\z_2 \ \z_1) \cdot F[\m]. 
    \]
    If $\bar{H}ess F[\m]$ exists and there exists $C\in \R$ such that $\bar{H}ess F[\m](\z_1,\z_2) \leq C \| \z_1 \| _{L^2(\m)}\| \z_2 \| _{L^2(\m)}$ for all $\z_1,\z_2 \in \nabla C_c^\infty(\domain)$ then $\bar{H}ess F[\m]$ has a unique extension onto $T_\m\sP_2(\domain)\times T_\m\sP_2(\domain)$. 
    In addition, we denote this extension by $Hess F[\m]$ and say that $F$ is twice differentiable at $\m$.
\end{defn}

Throughout this manuscript, we differentiate functions of the form $f\co[a,b]\to\R$. 
When we say that $f$ is differentiable on $[a,b]$, we mean that $f$ is differentiable on $(a,b)$ and its respective right and left-sided derivatives exist at the points $a$ and $b$. 
When evaluated at a generic point on the interval $[a,b]$, we denote derivatives of $f\co[a,b]\to\R$ by $f'$ or $\frac{df}{dx}$, however, when evaluated specifically at the endpoints, we utilise the right and left-sided derivatives which we denote by $\frac{df}{dx}_+$ and $\frac{df}{dx}_-$ respectively. Using this definition of differentiability, we derive the following convexity criteria which we will use to prove Theorem \ref{convexityone}.

\begin{lemma}\label{monotonederivative}
Let $f\co[a,b]\to\R$ and let $\l\in \R$. If $f$ is differentiable on $[a,b]$ then $f$ is $\l$-convex if and only if it satisfies $(f'(x)-f'(y)) \geq \l(x-y) $ for all $a\leq x,y \leq b$.
\end{lemma}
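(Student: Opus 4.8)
The plan is to prove the two directions separately, treating this as essentially the classical one-dimensional result, with the mild extra care needed because differentiability at the endpoints $a,b$ is only one-sided. Recall that $f$ is $\l$-convex means $f(x) = g(x) + \frac{\l}{2}x^2$ for $g$ convex on $[a,b]$; equivalently, the function $h(x) := f(x) - \frac{\l}{2}x^2$ is convex, and $h$ is differentiable on $[a,b]$ (in the one-sided sense at the endpoints) with $h'(x) = f'(x) - \l x$. So the whole statement reduces to the case $\l = 0$ applied to $h$: namely, a differentiable function $h\co[a,b]\to\R$ is convex if and only if $h'$ is non-decreasing, i.e. $(h'(x) - h'(y))(x-y)\geq 0$, which is exactly the stated inequality rewritten. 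Thus I would open the proof by performing this reduction explicitly and then prove the $\l=0$ statement.

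For the forward direction ($h$ convex $\Rightarrow$ $h'$ non-decreasing): take $a \leq y < x \leq b$ and pick any $z$ with $y < z < x$. By convexity, the difference quotients are monotone in the classical way — $\frac{h(z)-h(y)}{z-y} \leq \frac{h(x)-h(z)}{x-z}$ — and letting $z \da y$ on the left and $z \uparrow x$ on the right (using that the one-sided derivatives exist and equal $h'$ at interior points, and equal the appropriate one-sided derivative at $y=a$ or $x=b$) gives $h'(y) \leq h'(x)$. This handles $x \neq y$; the inequality is trivial when $x=y$.

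For the converse ($h'$ non-decreasing $\Rightarrow$ $h$ convex): given $a \leq x_1 < x_2 \leq b$ and $t\in(0,1)$, set $x_t := (1-t)x_1 + tx_2$ and apply the mean value theorem on $[x_1,x_t]$ and on $[x_t,x_2]$ to get points $\x_1 \in (x_1,x_t)$ and $\x_2 \in (x_t,x_2)$ with $\frac{h(x_t)-h(x_1)}{x_t - x_1} = h'(\x_1) \leq h'(\x_2) = \frac{h(x_2)-h(x_t)}{x_2 - x_t}$; rearranging this inequality yields $h(x_t) \leq (1-t)h(x_1) + t h(x_2)$, which is convexity. (The mean value theorem applies on the closed subintervals since $h$ is continuous there — differentiability on the open interiors plus one-sided differentiability at $a,b$ gives continuity on all of $[a,b]$ — and differentiable on the open interiors.) I would then translate back: adding $\frac{\l}{2}$ times the (strict) convexity identity for $x\mapsto x^2$ recovers the $\l$-convexity inequality \eqref{geodesicconvexityinequality}-style bound for $f$, and conversely the assumed inequality for $f$ is precisely the non-decreasing condition for $h'$.

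I do not anticipate a genuine obstacle here; the only point requiring a word of justification is the passage to the limit in the difference quotients at the endpoints in the forward direction, where one must invoke the one-sided derivative convention stated just before the lemma, and the observation that monotonicity of difference quotients for a convex function still grants the existence and the correct value of these one-sided limits at $a$ and $b$.
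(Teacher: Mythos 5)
Your proposal is correct, and it follows the same basic skeleton as the paper's proof: both begin by subtracting $\frac{\lambda}{2}x^{2}$ to reduce the statement to the case $\lambda=0$, i.e.\ to the classical equivalence between convexity of a differentiable function and monotonicity of its derivative (note that, like the paper's own proof, you silently read the displayed inequality as $(f'(x)-f'(y))(x-y)\geq \lambda(x-y)^{2}$, which is the intended meaning; taken literally for all $x,y$ it would force $f'$ to be affine). Where you diverge is in how that classical equivalence is established. The paper outsources it: it cites Binmore for the equivalence on the open interval $(a,b)$ and Pollard's one-sided-derivative inequality for convex functions to recover the monotonicity at the endpoints, with continuity of $f$ used to pass from convexity on $(a,b)$ to convexity on $[a,b]$. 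You instead give a self-contained argument: the three-chord (monotone difference quotient) inequality yields the forward direction, with the limit passage at $a$ and $b$ handled by the one-sided derivative convention, and the mean value theorem on the two subintervals $[x_{1},x_{t}]$ and $[x_{t},x_{2}]$ yields the converse, where only interior values of $h'$ are compared so the endpoint convention is irrelevant. The trade-off is the usual one: the paper's version is shorter but leans on two external references and a slightly implicit endpoint discussion, while yours is longer but fully elementary and treats the endpoints uniformly; both are valid proofs of the lemma as it is used later (namely via the inequality between the one-sided derivatives at $t=0$ and $t=1$).
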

\begin{proof}
    Since $x\mapsto f(x)$ is $\l$-convex if and only if $x\mapsto f(x)-\frac{\l}{2}x^2$ is convex, it suffices to show that $f$ is convex if and only if $(f'(x)-f'(y))(x-y) \geq 0 $ for all $a\leq x,y \leq b$. Furthermore, since $f$ is assumed to be differentiable, it is also continuous. Consequently, it suffices to show that $f$ is convex on $(a,b)$.

    It is a well-known result in convex analysis that $f\co[a,b]\to\R$ is convex on $(a,b)$ if and only if $f$ satisfies $(f'(y)-f'(x))(y-x) \geq 0$  for all $a < x,y < b$ (cf. \cite[Theorem 12.18]{Binmore}). Furthermore, it is shown in \cite[Appendix C, Theorem 1]{Pollard}, that the left and right-sided derivatives of a convex function satisfy the following inequality
    \[
     (b-a) \frac{df}{dx}_{-}\bigg|_{x=b}\geq f(b)-f(a) \geq (b-a)\frac{df}{dx}_{+}\bigg|_{x=a}.
    \]
    Identifying the one-sided derivatives at $b$ and $a$ with $f'(b)$ and $f'(a)$ respectively, the above inequality implies that $(f'(b) - f'(a))(b-a) \geq (f(b)-f(a)) - (f(b)-f(a)) = 0$. Consequently, it follows that $f$ is convex on $[a,b]$ if and only $f$ satisfies $(f'(x)-f'(y))(x-y) \geq 0 $  for all $a \leq x,y \leq b$.
\end{proof}
\section{Convexity Along Acceleration-Free Curves}
\label{sec:one}
In this section, our goal is to prove Theorem \ref{geodesicequivalencetheorem}. To achieve this, we first introduce acceleration-free curves and their associated notion of convexity. Subsequently, we examine some properties of these curves and develop a number of preparatory lemmas concerning the differentiation of functions along such curves. Finally, we prove Theorem \ref{geodesicequivalencetheorem} and construct an example to show that, in general, the notions of geodesic convexity and acceleration-free convexity do not coincide, even if we assume that our functions are continuous.
\subsection{Acceleration-Free Curves} 
\begin{defn}\label{accelerationfree}
We say that a curve $[0,1]\ni t\mapsto \m_t\in\sP_2(\domain)$ is \emph{acceleration-free} if there exist $\m_1,\m_2\in\sP_2(\domain)$ and $\g \in \G(\m_1,\m_2)$ such that $\m_t = ((1-t)\p^1 + t\p^2)_{\#}\g$ for all $t\in [0,1]$.  
Given $F\co\sP_2(\domain)\to\R$ and $\l \in \R$, we say that $F$ is \emph{$\l$-convex along acceleration-free curves} if, between every pair $\m_1,\m_2 \in \sP_2(\domain)$, Equation \eqref{convexityinequality} is satisfied for all $\g \in \G(\m_1,\m_2)$ and all $t\in[0,1]$. 
\begin{equation}\label{convexityinequality}
F(((1-t)\p^1 + t\p^2)_{\#}\g) \leq (1-t)F(\m_1) + tF(\m_2) -\frac{\l}{2}t(1-t)\int_{(\domain)^2}|x_1-x_2|^2 \ d\g(x_1,x_2). 
\end{equation}
\end{defn}

\begin{remark}
An acceleration-free curve may be induced by any transport plan, however, a Wasserstein geodesic is an acceleration-free curve induced by a transport plan which is optimal for the transport between the initial and final measures.
\end{remark}

\begin{remark}\label{lambdaconvexitycriterion}
 A function $F\co\sP_2(\domain)\to\R$ is $\l$-convex along acceleration-free curves (resp. $\l$-geodesically convex) if and only if the map $\sP_2(\domain)\ni \m \mapsto F(\m) - \frac{\l}{2} \int_{\domain} |x|^2 d\m$ is convex along acceleration-free curves (resp. geodesically convex).
\end{remark}

In the following Lemma, we show that an acceleration-free curve between discrete measures also defines a Wasserstein geodesic when we restrict ourselves to small enough sub-intervals of $[0,1]$. This property is key to our further analysis of acceleration-free convexity as it allows us to study acceleration-free curves using the properties of Wasserstein geodesics.
\begin{lemma}\label{onesidedgeodesic}
Let $\m_1,\m_2 \in \sP_2^d(\domain)$, let $\g \in \G(\m_1,\m_2)$ and let $[0,1] \ni t\mapsto \m_t:= ((1-t)\pi^1 + t\pi^2)_{\#}\g$.
For every $s \in [0,1)$, there exists $\varepsilon > 0$ such that, up to a time re-scaling, $[s,s+\varepsilon]\ni t \mapsto \m_t$ defines a unit time geodesic between $\m_s$ and $\m_{s+\varepsilon}$.  
Likewise, for every $s \in (0,1]$, there exists $\delta > 0$ such that, up to a time re-scaling, $[s-\delta,s]\ni t \mapsto \m_t$ defines a unit time geodesic between $\m_{s-\delta}$ and $\m_s$. 
\end{lemma}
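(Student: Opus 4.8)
The plan is to reduce the statement to a combinatorial fact about finitely many moving points. Since $\m_1 = \sum_{i=1}^n a_i \delta_{x_i}$ and $\m_2 = \sum_{j=1}^m b_j \delta_{y_j}$ are discrete, any $\g \in \G(\m_1,\m_2)$ is itself a discrete measure on $(\domain)^2$, supported on a finite set of pairs $(x_i,y_j)$ with weights $\g_{ij}$. Writing $z_{ij}(t) := (1-t)x_i + t y_j$ for those pairs in the support, the curve is $\m_t = \sum_{(i,j)\in\supp} \g_{ij}\,\delta_{z_{ij}(t)}$. I would fix $s \in [0,1)$ and seek $\varepsilon > 0$ such that the ``obvious'' plan — transporting the mass $\g_{ij}$ sitting at $z_{ij}(s)$ along the straight segment to $z_{ij}(s+\varepsilon)$ — is actually optimal between $\m_s$ and $\m_{s+\varepsilon}$.

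The key step is a cyclical-monotonicity argument. Recall that a plan is optimal for the quadratic cost iff its support is cyclically monotone (Knott–Smith / Rüschendorf; this is standard in the Ambrosio–Gigli–Savaré framework). So I need to show that the finite set of pairs $\{(z_{ij}(s), z_{ij}(s+\varepsilon))\}$ is cyclically monotone for $\varepsilon$ small enough, for every $s$. First observe that the collection of difference vectors is $z_{ij}(s+\varepsilon) - z_{ij}(s) = \varepsilon(y_j - x_i)$, so up to the positive factor $\varepsilon$ these directions do not depend on $\varepsilon$ at all; only the base points $z_{ij}(s)$ move. The subtlety is that distinct pairs $(i,j) \neq (k,\ell)$ may collide at time $s$, i.e. $z_{ij}(s) = z_{k\ell}(s)$, so the map $z \mapsto \varepsilon(y_j - x_i)$ is genuinely multivalued at such points and one must check the cyclical monotonicity inequality $\sum_p \langle v_p,\, b_{p+1} - b_p\rangle \le 0$ (where $b_p$ are base points and $v_p$ the associated velocities) even across colliding pairs. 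I would argue as follows: for a cycle whose base points are all distinct at time $s$, the inequality holds strictly by a compactness/continuity argument (there are finitely many such configurations, each gives a strict inequality coming from optimality of $\g$ restricted appropriately, or more simply from the fact that on a time-interval avoiding all collision times the curve is a genuine geodesic — which is the cheap sub-case), and one can absorb a sufficiently small $\varepsilon$-perturbation. For cycles passing through a collision point, the terms at that point involve the same base point $z_{ij}(s) = z_{k\ell}(s)$, so the telescoping sum localizes and the inequality there reduces, after cancellation, to the cyclical monotonicity of the \emph{finite set of points appearing in $\g$ itself at the endpoints} — more precisely to monotonicity of $\{(x_i, y_j) : \g_{ij}>0\}$ being unnecessary; the relevant fact is just that $\sum (\text{velocity})\cdot(\text{base displacement})$ over any closed loop telescopes in the base coordinate and the velocity contributions are handled by grouping pairs sharing a base point.

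Concretely I would choose $\varepsilon < \varepsilon_0$ where $\varepsilon_0$ is smaller than the gap to the next ``collision time'' after $s$, i.e. $\varepsilon_0 := \min\{ t - s : t > s,\ z_{ij}(t) = z_{k\ell}(t) \text{ for some } (i,j)\neq(k,\ell) \text{ with } z_{ij}(s)\neq z_{k\ell}(s)\}$, taking $\varepsilon_0 = 1 - s$ if this set is empty. On the half-open interval $[s, s+\varepsilon_0)$ no \emph{new} collisions occur; pairs that coincide at $s$ either stay coincident (if $y_j - x_i = y_\ell - x_k$, in which case they are genuinely the same moving point and can be merged) or separate immediately and stay separated. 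After merging genuinely-identical moving points, on $(s, s+\varepsilon_0)$ all base points are distinct and the restriction of the curve there is a geodesic by Lemma \ref{geodesicremark} applied on a slightly shrunk interval — then continuity in $\varepsilon \downarrow 0$ of the transport cost, together with lower semicontinuity / the characterization via cyclical monotonicity, upgrades this to $[s, s+\varepsilon]$ being a (rescaled) geodesic for any $\varepsilon < \varepsilon_0$. The argument for the left interval $[s-\d, s]$, $s \in (0,1]$, is symmetric with $t \mapsto 1-t$.

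\textbf{Main obstacle.} The delicate point is precisely the behaviour at collision times: a priori the transport plan $\g$ may route mass from $x_i$ to $y_j$ in a way that, viewed as a plan between $\m_s$ and $\m_{s+\varepsilon}$, is \emph{not} the one that simply keeps each particle moving in a straight line — mass could in principle be re-sorted at the collision point $z_{ij}(s)=z_{k\ell}(s)$ to get a cheaper plan between $\m_s$ and $\m_{s+\varepsilon}$. I expect to rule this out by showing that any such re-sorting would only swap segments whose common endpoint is the collision point, hence would not change the total cost to first order and, because the straight-line plan is already cyclically monotone (which is where the small-$\varepsilon$ choice is used, to keep the non-colliding part of the support cyclically monotone), re-sorting cannot strictly decrease the cost. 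Making this ``swap at a shared endpoint is cost-neutral'' observation precise — essentially that for three collinear-in-time points the cost of matching is determined only by the velocities, not the pairing, when the base points agree — is the crux, and it is where I would spend the most care.
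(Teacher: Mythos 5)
Your overall strategy is the paper's: reduce to finitely many particles moving on straight lines and show, via cyclical monotonicity, that the ``follow-your-particle'' plan between $\m_s$ and $\m_{s+\varepsilon}$ is optimal, treating pairs that coincide at time $s$ separately from those that do not (the coincident case, where permuting within a coincidence class is cost-neutral, is indeed the easy half and is handled the same way in the paper). The genuine gap is in how you pin down $\varepsilon$. You take $\varepsilon_0$ to be the time to the next collision after $s$ and assert that on a collision-free interval the curve is automatically a (rescaled) geodesic, citing Lemma \ref{geodesicremark}; but that lemma presupposes that the curve is a geodesic, so the argument is circular, and the assertion itself is false for $d\geq 2$. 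Concretely, in $\R^2$ let $\g$ give mass $\tfrac12$ to each of the pairs $((0,0),(10,1))$ and $((10,0),(0,2))$. The particles $(10t,t)$ and $(10-10t,2t)$ never collide on $[0,1]$, so at $s=0$ your $\varepsilon_0$ equals $1$; yet for $\varepsilon=0.9$ the follow-your-particle pairing between $\m_0$ and $\m_{0.9}$ has squared cost about $83$ while the swapped pairing costs about $3$, so the restriction of the curve to $[0,0.9]$ has length far exceeding $W_2(\m_0,\m_{0.9})$ and is not a geodesic even up to reparametrisation. Your alternative justification, a ``strict inequality coming from optimality of $\g$ restricted appropriately,'' also has nothing to lean on: $\g$ is an arbitrary plan in $\G(\m_1,\m_2)$ and is in general not optimal.

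What is actually needed, and what the paper's proof supplies, is smallness of $\varepsilon$ relative to the spatial geometry at time $s$, not merely to collision times: for each particle $p$ one fixes $\delta_p>0$ separating its position at time $s$ from every particle not coinciding with it at $s$, uniformly for $t\in[s,s+\varepsilon_p]$, and then shrinks $\varepsilon_p$ further so that the particles which do coincide with $p$ at time $s$ stay within $\delta_p$ on that window (Equation \eqref{infimum}); with this, any permutation sending some particle outside its coincidence class strictly increases the transport cost, while permutations inside a class leave it unchanged, and cyclical monotonicity follows. Your remark that one can ``absorb a sufficiently small $\varepsilon$-perturbation'' is the right instinct — the leading term in the monotonicity sum is $\tfrac12\sum_p|b_p-b_{\rho(p)}|^2$ over distinct base points, and the velocity contribution is $O(\varepsilon)$ — but as written the proposal never ties $\varepsilon$ to the minimal gap between distinct atom locations at time $s$ (your $\varepsilon_0$ depends only on collision times), so the key claim that the plan is cyclically monotone on $[s,s+\varepsilon]$ is not established and the proof does not close. (In $d=1$ your collision-time criterion happens to work, since particles there can only exchange order by colliding, but the lemma is stated on $\domain$.)
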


\begin{proof}
Since $\m_1,\m_2$ are discrete measures, we may denote $\m_1 = \sum_{i=1}^n a_i\delta_{x_i}$ and $\m_2 =  \sum_{j=1}^m b_j\delta_{y_j}$ in such a way that $x_i \neq x_k$ for $i \neq k$ and $y_j\neq y_l$ for $j \neq l$. 
Additionally, since $\g\in \G(\m_1,\m_2)$, it follows from the marginal properties of $\g$ that there exist non-negative coefficients $c_{ij}$ satisfying $\sum_{i,j=1}^{n,m} c_{ij}=1$ and such that $\m_t = \sum_{i,j=1}^{n,m} c_{ij}\delta_{(1-t)x_i + ty_j}$ for all $ t\in [0,1]$. Define $l := nm$ and, for $i,j \in \{1,\dots,n\}\times \{1,\dots,m\}$, define 
\[
\theta_{(i-1)m+j} := c_{ij}, \ z_{(i-1)m+j} := y_j -x_i,\  w_{(i-1)m+j} := x_i.
\] 
It follows that $\m_t = \sum_{i,j=1}^{n,m} c_{ij}\delta_{(1-t)x_i + ty_j} = \sum_{k=1}^{l} \theta_k\delta_{z_kt + w_k}$. 

Fix $p,q\in  \{1,\dots,l\}$. Since the elements of $x_1,\dots,x_n$ and $y_1,\dots,y_m$ were chosen to be distinct, there exists at most one $t\in[0,1]$ such that $z_pt + w_p = z_qt + w_q$.  Consequently, there exists a  finite set $Q \subset [0,1]$ such that $z_pt + w_p \neq z_qt + w_q$ for all $p,q\in  \{1,\dots,l\}$ and $t\in [0,1]\setminus Q$.

Fix $s\in[0,1)$, fix $p \in \{1,\dots,l\}$ and let $P_{p}^s\subset  \{1,\dots l\}$ denote the set of integers $q$ such that $z_ps + w_p = z_qs + w_q$. In particular, we remark that, $p\in P_p^s$ for all $p \in \{1,\dots l\}$ and, consequently, the set $P_p^s$ is always non-empty. Since the set $Q$ is finite, we may choose $\varepsilon_{p}>0 $ such that $(s,s+\varepsilon_{p}]\cap Q =\varnothing$. Consequently, there exists $\d_{p}>0$ such that $
|z_ps + w_p - (z_qt + w_q)|>\delta_{p}$ for all $q\notin P_{p}^s$ and all $t\in [s,s+\varepsilon_{p}]$. Additionally, for any $q \in P^s_{p}$, it follows that 
\[
\lim_{t\to s} |z_ps + w_p - (z_qt + w_q)| = 0.
\] 
Consequently, we may re-choose an even smaller $\varepsilon_{p}> 0$ such that
$\delta_{p}
> |z_ps + w_p - (z_qt + w_q)|$
for all $ t\in [s,s+\varepsilon_{p}]$ and all $q \in P_{p}^s$. In particular, by re-choosing a smaller $\varepsilon_{p}$ it still holds that
$|z_ps + w_p - (z_qt + w_q)|>\delta_{p}$ for all $q\notin P_{p}^s$ and all $t\in [s,s+\varepsilon_{p}]$. Moreover, Equation \eqref{infimum} holds for all $t\in [s,s+\varepsilon_{p}]$ and all $q \in P_{p}^s$.  
\begin{equation}
\label{infimum}
\begin{split}
\inf_{q'\notin P_{p}^s} & |z_ps + w_p - (z_{q'}t + w_{q'})| > \delta_p
>  |z_ps + w_p - (z_{q}t + w_{q})| = |z_ps + w_p - (z_{q}t + w_{p})|.
\end{split}
\end{equation}
We subsequently define $\varepsilon := \inf_{1\leq p\leq l} \varepsilon_p$.  

\textbf{Claim:} Fix $t \in [s, s+\varepsilon]$. There exists an optimal plan $\s\in \G_o(\m_s,\m_t)$ of the form $\s:= \sum_{k=1}^{l}\theta_k (\delta_{z_ks + w_k}\otimes \delta_{z_kt + w_k})$. 
\begin{proof}[Proof of Claim]
To show that $\s$ is an optimal plan it is sufficient to show that the support of $\s$ is cyclically monotone. This result is a consequence of \cite[Theorem 6.1.4]{Ambrosio2008}. Moreover, the support of $\s$ is cyclically monotone if the following inequality is satisfied any permutation $\rho$ of the integers $\{1,\dots,l\}$. 
\[
\sum_{k=1}^l |z_ks + w_k-(z_{\rho(k)}t + w_{\rho(k)})|^2 \geq \sum_{k=1}^l |z_ks + w_k-(z_{k}t + w_{k})|^2
\]
We first assume that $\rho$ is a permutation map such that $\rho(k) \in P_k^s$ for all $k \in \{1,\dots,l\}$. By the definition of $P_k^s$ it follows that $z_ks + w_k =z_{\rho(k)}s + w_{\rho(k)}$ for all $k \in \{1,\dots,l\}$. consequently, the following equality holds.
\[
\sum_{k=1}^l |z_ks + w_k-(z_{\rho(k)}t + w_{\rho(k)})|^2 = 
\sum_{k=1}^l |z_{\rho(k)}s + w_{\rho(k)}-(z_{\rho(k)}t + w_{\rho(k)})|^2 = \sum_{k=1}^l |z_ks + w_k-(z_{k}t + w_{k})|^2.
\]
On the other hand, assume that $\rho$ is a permutation map and there exists a set $R \subset \{1,\dots,l\}$ such that $\rho(k) \notin P_k^s$ for all $k\in R$ and such that $\rho(k) \in P_k^s$ for all $k\in \{1,\dots,l\}\setminus R$. As a consequence of Equation \eqref{infimum}, the following system of inequalities holds.
\begin{flalign*}
& \sum_{k=1}^l |z_ks + w_k-(z_{\rho(k)}t + w_{\rho(k)})|^2\\ = 
& \sum_{k\in R} |z_ks + w_k-(z_{\rho(k)}t + w_{\rho(k)})|^2 + \sum_{k\notin R} |z_ks + w_k-(z_{\rho(k)}t + w_{\rho(k)})|^2 \\ > & \sum_{k\in R} \delta_k + \sum_{k\notin R} |z_ks + w_k-(z_{\rho(k)}t + w_{\rho(k)})|^2 = \sum_{k\in R} \delta_k + \sum_{k\notin R} |z_{\rho(k)}s + w_{\rho(k)}-(z_{\rho(k)}t + w_{\rho(k)})|^2 
\\
> & \sum_{k=1}^l |z_{\rho(k)}s + w_{\rho(k)}-(z_{\rho(k)}t + w_{\rho(k)})|^2 = \sum_{k=1}^l |z_{k}s + w_{k}-(z_{k}t + w_{k})|^2.
\end{flalign*}
Since the two cases we have considered include all possible permutations of the integers $\{1,\dots,l\}$, we conclude that the support of $\s$ must be cyclically monotone and, hence, the plan $\s$ must be optimal between $\m_s$ and $\m_t$.
\end{proof}
Since $\s$ defines an optimal plan,  Equation \eqref{discretewassersteindistance} holds.
\begin{equation}\label{discretewassersteindistance}
\begin{split}
W_2(\m_s, \m_t) & = \bigg(\int_{(\domain)^2} |x-y|^2 \ d\s(x,y)\bigg)^\frac{1}{2} 
\\
& =  \bigg(\sum_{k=1}^{l} \theta_k |z_ks + w_k-(z_kt + w_k)|^2\bigg)^\frac{1}{2} = |t-s| \bigg(\sum_{k=1}^{l} \theta_k |z_k|^2\bigg)^\frac{1}{2} .
\end{split}
\end{equation} 
As a consequence of Equation \eqref{discretewassersteindistance}, the following equality also holds for all $t \in [s,s+\varepsilon]$. 
\[
\frac{W_2(\m_s,\m_t)}{|t-s|}= \bigg(\sum_{k=1}^{l} \theta_k |z_k|^2\bigg)^\frac{1}{2} = \frac{W_2(\m_s,\m_{s+\varepsilon})}{\varepsilon}.
\] 
Moreover, it follows that $\varepsilon W_2(\m_s,\m_t) = |t-s|W_2(\m_s,\m_{s+\varepsilon})$ for all $t \in [s,s+\varepsilon]$, and so, up to a time re-scaling of a factor $\varepsilon$, the curve $[s,s+\varepsilon]\ni t \mapsto \m_t$ defines a unit time geodesic between $\m_s$ and $\m_{s+\varepsilon}$. By a similar argument, for every $s \in (0,1]$, there exists $\delta > 0$ such that, up to a time re-scaling, $[s-\delta,s]\ni t \mapsto \m_t$ defines a unit time geodesic between $\m_{s-\delta}$ and $\m_s$.    
\end{proof}
To further motivate the study of acceleration-free curves between discrete measures, the following lemma shows that it is enough, in the context of proving Theorem \ref{geodesicequivalencetheorem}, to show that a function is convex along acceleration-free curves when restricted to the set $\sP_2^d(\domain)$.

\begin{lemma}\label{empiricalapproximation}
Let $F\co\sP_2(\domain)\to\R$. If $F$ is continuous then $F$ is $\l$-convex along acceleration-free curves if and only if its restriction to $\sP^d_2(\domain)$ is $\l$-convex along acceleration-free curves.
\end{lemma}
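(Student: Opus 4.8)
The plan is to prove the non-trivial direction: assuming that the restriction of $F$ to $\sP_2^d(\domain)$ is $\l$-convex along acceleration-free curves, deduce the same for all of $\sP_2(\domain)$. (The reverse implication is immediate since an acceleration-free curve between discrete measures is in particular an acceleration-free curve.) By Remark \ref{lambdaconvexitycriterion} it suffices to treat the case $\l = 0$: replacing $F$ by $\m\mapsto F(\m) - \frac{\l}{2}\int_{\domain}|x|^2\,d\m$ preserves both continuity and the discrete-convexity hypothesis, and an acceleration-free curve $\m_t = ((1-t)\p^1 + t\p^2)_\#\g$ has $\int_{\domain}|x|^2\,d\m_t$ quadratic in $t$ with the right coefficient, so $0$-convexity of the modified functional along such curves is exactly $\l$-convexity of $F$.

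So fix $\m_1,\m_2\in\sP_2(\domain)$, $\g\in\G(\m_1,\m_2)$, and set $\m_t := ((1-t)\p^1+t\p^2)_\#\g$; we want $F(\m_t)\le (1-t)F(\m_1)+tF(\m_2)$ for all $t\in[0,1]$. The idea is to approximate $\g$ by discrete plans. First I would take a sequence $\g^n\in\sP_2((\domain)^2)$ of discrete measures (finite combinations of Dirac masses on $(\domain)^2$) with $\g^n\to\g$ in $W_2((\domain)^2)$; such a sequence exists because discrete measures are $W_2$-dense. Writing $\m_1^n := \p^1_\#\g^n$ and $\m_2^n := \p^2_\#\g^n$, these are discrete measures on $\domain$, $\g^n\in\G(\m_1^n,\m_2^n)$, and the curve $\m_t^n := ((1-t)\p^1+t\p^2)_\#\g^n$ is an acceleration-free curve between discrete measures. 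By hypothesis,
\[
F(\m_t^n) \le (1-t)F(\m_1^n) + tF(\m_2^n)\qquad\text{for all }t\in[0,1],\ n\in\N.
\]
It remains to pass to the limit. Since $\p^1,\p^2$ and, more generally, $(1-t)\p^1+t\p^2$ are $1$-Lipschitz on $(\domain)^2$, pushing forward a coupling realising $W_2(\g^n,\g)$ shows $W_2(\m_t^n,\m_t)\le W_2(\g^n,\g)\to 0$ for each fixed $t$ (and likewise $\m_i^n\to\m_i$ for $i=1,2$). Continuity of $F$ then gives $F(\m_t^n)\to F(\m_t)$, $F(\m_i^n)\to F(\m_i)$, and taking $n\to\infty$ in the displayed inequality yields $F(\m_t)\le (1-t)F(\m_1)+tF(\m_2)$, as required.

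The only genuine point requiring care — and what I'd flag as the main obstacle — is the approximation step: producing discrete plans $\g^n\to\g$ in $W_2$ on the product space and checking that $W_2$-convergence of $\g^n$ transfers to $W_2$-convergence of the whole family of intermediate measures $\m_t^n$ uniformly enough to conclude. The density of discrete measures in $(\sP_2(Y),W_2)$ for $Y=(\domain)^2$ is standard (truncate the support to a compact set and approximate by empirical-type measures), and the Lipschitz-pushforward bound $W_2(S_\#\rho, S_\#\rho')\le \mathrm{Lip}(S)\,W_2(\rho,\rho')$ applied with $S = (1-t)\p^1 + t\p^2$ handles the transfer cleanly, with the Lipschitz constant bounded by $1$ uniformly in $t\in[0,1]$. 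Everything else is a routine limiting argument using only continuity of $F$.
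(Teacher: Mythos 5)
Your proposal is correct and follows essentially the same route as the paper: approximate the plan $\g$ by discrete plans $\g_n$ in $W_2(\sP_2((\domain)^2))$, apply the discrete hypothesis to the induced curves, and pass to the limit using continuity of $F$ and the (Lipschitz pushforward) convergence $W_2(\m_t^n,\m_t)\to 0$. The only cosmetic difference is that you first reduce to $\l=0$ via Remark \ref{lambdaconvexitycriterion}, whereas the paper keeps the $\l$-term and passes $\int|x_1-x_2|^2\,d\g_n\to\int|x_1-x_2|^2\,d\g$ directly to the limit; both are fine.
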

\begin{proof}
    If $[0,1]\ni t\mapsto\m_t$ defines an acceleration-free curve between two discrete measures then $\m_t$ is a discrete measure for all $t\in[0,1]$. Consequently, if $F$ is $\l$-convex along acceleration-free curves then its restriction to $\sP^d_2(\domain)$ is $\l$-convex along acceleration-free curves and it is left to prove the converse implication. Let $\m_1, \m_2 \in \sP_2(\domain)$ and let $\g \in \G(\m_1,\m_2)$.
    Since $\sP_2^d(\domain\times\domain)$ is dense in $\sP_2(\domain\times\domain)$, there exists a sequence of discrete measures $(\g_n)_{n\in\N}$ such that $\lim_{n\to\infty}W_2(\g_n,\g) = 0$. Defining the curves $[0,1]\ni t\mapsto \m_t^n := ((1-t)\pi^1 +t \pi^2)_{\#}\g_n$ and $[0,1]\ni t\mapsto \m_t := ((1-t)\pi^1 +t \pi^2)_{\#}\g$, it follows from the convergence of  $(\g_n)_{n\in\N}$ that $\lim_{n\to\infty}W_2(\m_t^n,\m_t) = 0$ for all $t\in[0,1]$. 
    Moreover, since $\g_n \in \sP_2^d(\domain\times\domain)$, for every $n\in \N$, the curve $[0,1]\ni t\mapsto \m_t^n$ defines an acceleration-free curve between two discrete measures which we denote $\m_{1,n}$ and $\m_{2,n}$. Consequently, if we assume that $F\co\sP_2(\domain)\to\R$ is continuous and its restriction to $\sP_2^d(\domain)$ is $\l$-convex along acceleration-free curves, then the following system of inequalities holds for all $t\in[0,1]$.
    \begin{flalign*}
       F(\m_t)&  = \lim_{n\to\infty} F(\m_t^n)  \\
       & \leq (1-t)\lim_{n\to\infty} F(\m_{1,n})  + t\lim_{n\to\infty} F(\m_{2,n}) - t(1-t)\frac{\l}{2}\lim_{n\to\infty}  \int_{(\domain)^2} |x_1-x_2|^2 \ d\g_n(x_1,x_2) \\
       &= (1-t) F(\m_{1}) + tF(\m_2) - t(1-t)\frac{\l}{2} \int_{(\domain)^2} |x_1-x_2|^2 \ d\g(x_1,x_2).
       \end{flalign*}
       Since $\m_1,\m_2$ and $\g \in \G(\m_1,\m_2)$ were chosen arbitrarily, we conclude that $F$ is $\l$-convex along acceleration-free curves.
\end{proof}

\subsection{Differentiation Along Acceleration-Free Curves and Proof of Theorem \ref{geodesicequivalencetheorem}.}
Before attempting to calculate the derivative of a function along an acceleration-free curve, it is first useful to establish the following lemma which characterises the derivative of a differentiable function along geodesics. Since acceleration-free curves between discrete measures behave somewhat like Wasserstein geodesics, we may also use Lemma \ref{firstderivative} in order to characterise the derivative of a differentiable function along these curves.

\begin{lemma}\label{firstderivative}
    Let $F\co\sP_2(\domain)\to\R$ and let $[0,1]\ni t\mapsto \m_t := ((1-t)\pi^1 + t\pi^2)_{\#}\g$ with $\m_1,\m_2\in\sP_2(\domain)$, and $\g \in \G_o(\m_1,\m_2)$. If $F$ is differentiable then $[0,1]\ni t \mapsto F(\m_t)$ is differentiable on $[0,1]$. In particular, 
    \[
    \frac{d}{dt}F(\m_t) = \int_{(\domain)^2} \nabla_wF[\m_t]((1-t) x_1 +tx_2)\cdot(x_2-x_1) \ d\g (x_1,x_2). 
    \]
 \end{lemma}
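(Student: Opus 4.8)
The goal is to show that $t\mapsto F(\m_t)$ is differentiable on $[0,1]$, with the derivative given by the stated integral formula. Since $\g\in\G_o(\m_1,\m_2)$, the curve $t\mapsto\m_t$ is a (rescaled) geodesic, and by Lemma \ref{geodesicremark} the plan $\g_{s,r}:=[((1-s)\pi^1+s\pi^2),((1-r)\pi^1+r\pi^2)]_{\#}\g$ is the \emph{unique} optimal plan between $\m_s$ and $\m_r$ whenever $\{s,r\}\neq\{0,1\}$. So for each fixed $s\in[0,1]$ I would fix a target point $t$ near $s$ and feed the pair $(\m_s,\m_t)$ together with its optimal plan into Definition \ref{differentiability} of differentiability.

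First I would treat an interior point $s\in(0,1)$. Applying Definition \ref{differentiability} with $\m=\m_s$, $\n=\m_t$ and $\g=\g_{s,t}$ gives
\[
F(\m_t)-F(\m_s)=\int_{(\domain)^2}\nabla_wF[\m_s](x')\cdot(y'-x')\,d\g_{s,t}(x',y')+o(W_2(\m_s,\m_t)).
\]
Now I would unwind the pushforward: a point $(x',y')$ in the support of $\g_{s,t}$ is $((1-s)x_1+sx_2,\,(1-t)x_1+tx_2)$ for $(x_1,x_2)$ in the support of $\g$, so $y'-x'=(t-s)(x_2-x_1)$ and the integral becomes $(t-s)\int\nabla_wF[\m_s]((1-s)x_1+sx_2)\cdot(x_2-x_1)\,d\g(x_1,x_2)$. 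Combined with $W_2(\m_s,\m_t)=|t-s|\big(\int|x_1-x_2|^2\,d\g\big)^{1/2}$ (which holds since $\g_{s,t}$ is optimal), dividing by $t-s$ and letting $t\to s$ yields that the two-sided derivative exists at $s$ and equals the claimed expression evaluated at $t=s$. The endpoints $s=0$ and $s=1$ are handled the same way, except that I only let $t$ approach from one side — so $\m_t$ for $t\in(0,1]$ still pairs with $\m_0$ via a \emph{unique} optimal plan by Lemma \ref{geodesicremark}, and I obtain the one-sided derivatives $\frac{d}{dt}F(\m_t)|_+$ at $0$ and $\frac{d}{dt}F(\m_t)|_-$ at $1$, matching the convention for differentiability on $[a,b]$ set out before Lemma \ref{monotonederivative}.

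There is one subtlety I would flag: Definition \ref{differentiability} asserts the expansion holds for \emph{all} $\n$ and \emph{all} $\g\in\G_o(\m_s,\n)$, so a priori I could plug in any optimal plan; the point of invoking Lemma \ref{geodesicremark} is simply to identify that the plan respecting the geodesic structure (the one whose pushforward gives the clean $(t-s)(x_2-x_1)$ displacement) is available — indeed it is the only one — so the error term genuinely is $o(|t-s|)$ and the limit computation goes through. I would also remark that the formula $\frac{d}{dt}F(\m_t)=\int\nabla_wF[\m_t]((1-t)x_1+tx_2)\cdot(x_2-x_1)\,d\g$ is exactly the pointwise statement obtained by replacing $s$ with the running variable $t$. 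The main obstacle is bookkeeping rather than conceptual: carefully checking that the $o(W_2(\m_s,\m_t))$ term from Definition \ref{differentiability} survives division by $t-s$ (it does, because $W_2(\m_s,\m_t)$ is \emph{exactly} a constant multiple of $|t-s|$ here, not merely comparable), and handling the degenerate case where $\g$ is concentrated so that $W_2(\m_s,\m_t)=0$, in which case $\m_t\equiv\m_s$ and the derivative is trivially $0$, consistent with the formula.
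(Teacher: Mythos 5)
Your proposal is correct and follows essentially the same route as the paper: invoke Lemma \ref{geodesicremark} to identify the unique optimal plan between $\m_s$ and $\m_t$ as the pushforward $[((1-s)\pi^1+s\pi^2),((1-t)\pi^1+t\pi^2)]_{\#}\g$, apply Definition \ref{differentiability} with that plan so the displacement unwinds to $(t-s)(x_2-x_1)$, and kill the $o(W_2(\m_s,\m_t))$ term after dividing by $t-s$ using the constant-speed identity $W_2(\m_s,\m_t)=|t-s|\,W_2(\m_1,\m_2)$. The additional remarks on endpoints and the degenerate constant-curve case are fine but not substantively different from the paper's argument.
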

\begin{proof} Let $\g_{h,t}$ denote the unique element of $\G_o(\m_t,\m_{t+h})$ characterised by Lemma \ref{geodesicremark}.
\begin{equation*}
\begin{split}
\frac{d}{dt} F(\m_t) & = \lim_{h\to 0} \frac{1}{h}(F(\m_{t+h}) - F(\m_{t})) \\
& = \lim_{h\to 0} \frac{1}{h} \int_{(\domain)^2} \nabla_wF[\m_t](y_1)\cdot(y_2-y_1) \ d\g_{h,t} (y_1,y_2) +  \lim_{h\to 0} \frac{o(W_2(\m_t,\m_{t+h}))}{h} \\
& = \lim_{h\to 0} \frac{h}{h} \int_{(\domain)^2} \nabla_wF[\m_t]((1-t) x_1 +tx_2)\cdot(x_2-x_1) \ d\g (x_1,x_2) + 0 \\
& =  \int_{(\domain)^2} \nabla_wF[\m_t]((1-t) x_1 +tx_2)\cdot(x_2-x_1) \ d\g (x_1,x_2).
\end{split}
\end{equation*}
In particular, since $t\mapsto \m_t$ is a geodesic, the following system of equalities holds.
\[
\lim_{h\to 0} \frac{o(W_2(\m_t,\m_{t+h}))}{h}  = \lim_{h\to 0} \frac{ o(W_2(\m_t,\m_{t+h})) }{ W_2(\m_t,\m_{t+h})}\cdot \lim_{h\to 0}  \frac{W_2(\m_t,\m_{t+h})}{h} = 0 \cdot W_2(\m_1,\m_2) = 0.
\]
\end{proof}

\begin{lemma}\label{accelerationfreederivative}
Let $F\co\sP_2(\domain)\to\R$ and let $[0,1] \ni t\mapsto \m_t := ((1-t)\pi^1 + t\pi^2)_{\#}\g$ with $\m_1,\m_2 \in \sP_2^d(\domain)$ and $\g \in \G(\m_1,\m_2)$. If $F$ is differentiable on $\sP_2(\domain)$ then the map $[0,1]\ni t \mapsto F(\m_t)$ is differentiable on $[0,1]$. In particular,
 \[
    \frac{d}{dt}F(\m_t) = \int_{(\domain)^2} \nabla_wF[\m_t]((1-t) x_1 +tx_2)\cdot(x_2-x_1) \ d\g (x_1,x_2). 
\]
\end{lemma}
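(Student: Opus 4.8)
The plan is to combine Lemma \ref{onesidedgeodesic}, which tells us that an acceleration-free curve between discrete measures coincides, after an affine time-rescaling, with a Wasserstein geodesic on a suitably small one-sided neighbourhood of each parameter value, with Lemma \ref{firstderivative}, which provides the derivative formula along geodesics. First I would fix $s \in [0,1)$ and take $\e > 0$ as in Lemma \ref{onesidedgeodesic}. Setting $r := (t-s)/\e$ and $\n_r := \m_{s+r\e}$ for $r \in [0,1]$, an elementary computation gives $\n_r = ((1-r)\pi^1 + r\pi^2)_{\#}\s$, where $\s := [(1-s)\pi^1 + s\pi^2,\,(1-s-\e)\pi^1 + (s+\e)\pi^2]_{\#}\g \in \G(\m_s,\m_{s+\e})$; arguing as in the proof of Lemma \ref{onesidedgeodesic} one checks that $\s$ is in fact optimal, so $[0,1] \ni r \mapsto \n_r$ is a genuine geodesic in the sense of Definition \ref{geodesicconvexity}.

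Next I would apply Lemma \ref{firstderivative} to this geodesic, obtaining that $r \mapsto F(\n_r)$ is differentiable on $[0,1]$ — in particular from the right at $r = 0$ — with $\frac{d}{dr}F(\n_r) = \int_{(\domain)^2} \nabla_wF[\n_r]((1-r)\xt_1 + r\xt_2)\cdot(\xt_2-\xt_1)\,d\s(\xt_1,\xt_2)$. Evaluating at $r = 0$ and pushing the integral back to $\g$ via the defining map of $\s$ (under which $\xt_1 \mapsto (1-s)x_1 + sx_2$ and $\xt_2 - \xt_1 \mapsto \e(x_2-x_1)$) yields $\frac{d}{dr}F(\n_r)|_{r=0} = \e \int_{(\domain)^2} \nabla_wF[\m_s]((1-s)x_1 + sx_2)\cdot(x_2-x_1)\,d\g(x_1,x_2)$. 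Since $F(\m_t) = F(\n_{(t-s)/\e})$ for $t \in [s,s+\e]$, the chain rule for one-sided derivatives shows that the right-sided derivative of $t \mapsto F(\m_t)$ at $s$ exists and equals the right-hand side of the asserted identity. Running the symmetric argument with the $\d > 0$ from Lemma \ref{onesidedgeodesic} shows, for every $s \in (0,1]$, that the left-sided derivative of $t \mapsto F(\m_t)$ at $s$ exists and equals the same expression.

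To conclude: at $s = 0$ and $s = 1$ only one one-sided derivative is required, and it has been produced; at each interior $s \in (0,1)$ the left- and right-sided derivatives both equal $\int_{(\domain)^2}\nabla_wF[\m_s]((1-s)x_1+sx_2)\cdot(x_2-x_1)\,d\g$, so the two-sided derivative exists there. Hence $t \mapsto F(\m_t)$ is differentiable on $[0,1]$ in the sense used throughout the paper, with the stated derivative. The only delicate points are bookkeeping ones: verifying that the plan induced by the reparametrisation is precisely the optimal plan of Lemma \ref{onesidedgeodesic} (so that Lemma \ref{firstderivative} genuinely applies) and correctly carrying the rescaling factor $\e$ (resp. $\d$) through the one-sided chain rule. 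I do not expect a deeper obstacle — importantly, one neither needs nor can in general glue the two one-sided geodesics at $s$ into a single geodesic, but the explicit formula makes it transparent that the two one-sided derivatives coincide, which is all that the statement requires.
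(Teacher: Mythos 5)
Your proposal is correct and follows essentially the same route as the paper: apply Lemma \ref{onesidedgeodesic} to get one-sided geodesics, use Lemma \ref{firstderivative} along the rescaled geodesic, identify the optimal plan as the pushforward of $\g$ under the pair of interpolation maps so the rescaling factor cancels, and match the two one-sided derivatives at interior times. The only cosmetic difference is that you verify optimality of the explicit plan $\s$ directly (as in the claim inside Lemma \ref{onesidedgeodesic}), whereas the paper first invokes the existence of some optimal plan $\g_{s,\varepsilon}$ and then identifies it with that pushforward via the computation in Equation \eqref{pushforward}; both are sound and amount to the same bookkeeping.
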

\begin{proof} 
Given $s \in [0,1)$, it follows from Lemma \ref{onesidedgeodesic} that there exists $\varepsilon > 0$ such that $[s,s+\varepsilon]\ni t\mapsto \m_t$ defines a geodesic. Moreover, by Definition \ref{geodesicconvexity}, there exists an optimal plan $\g_{s,\varepsilon}\in \G_o(\m_s,\m_{s+\varepsilon})$ such that $\m_{s+t\varepsilon} = ((1-t)\pi^1 + t\pi^2)_{\#}\g_{s,\varepsilon}$ for all $t \in [0,1]$. Consequently, it follows from Lemma \ref{firstderivative} that
\[
 \frac{d}{dt}_{+}F(\m_t)\bigg|_{t=s} = \frac{1}{\varepsilon}\frac{d}{dt}_{+}F(\m_{s+t\varepsilon})\bigg|_{t=0} = \frac{1}{\varepsilon}\int_{(\domain)^2} \nabla_wF[\m_s](y_1)\cdot(y_2-y_1) \ d\g_{s,\varepsilon} (y_1,y_2). 
\]
Since $((1-(s+t\varepsilon))\pi^1 + (s+t\varepsilon)\pi^2)_{\#}\g = \m_{s+t\varepsilon} = ((1-t)\pi^1 + t\pi^2)_{\#}\g_{s,\varepsilon}$ for all $t\in[0,1]$ we may express $\g_{s,\varepsilon}$ in terms of $\g$. In particular, the following equality holds.
\begin{equation}\label{pushforward}
\begin{split}
    &((1-t)\pi^1 + t\pi^2)_{\#}\bigg([(1-s)\p^1 + s\p^2,(1-(s+\varepsilon))\p^1 + (s+\varepsilon)\p^2]_{\#}\g\bigg)\\
    = & \bigg(((1-t)\pi^1 + t\pi^2)\circ [(1-s)\p^1 + s\p^2,(1-(s+\varepsilon))\p^1 + (s+\varepsilon)\p^2]\bigg)~_{\#}\g \\
    = & ((1-(s+t\varepsilon))\pi^1 + (s+t\varepsilon)\pi^2)_{\#}\g = ((1-t)\pi^1 + t\pi^2)_{\#}\g_{s,\varepsilon}.
\end{split}
\end{equation} 
From Equation \eqref{pushforward}, it follows that $\g_{s,\varepsilon} = [((1-s)\p^1 + s\p^2),((1-(s+\varepsilon))\p^1 + (s+\varepsilon)\p^2)]_{\#}\g$ and,   consequently, 
\begin{flalign*}
     \frac{d}{dt}_{+}F(\m_t)\bigg|_{t=s} & = \frac{1}{\varepsilon}\int_{(\domain)^2} \nabla_wF[\m_s](y_1)\cdot(y_2-y_1) \ d\g_{s,\varepsilon} (y_1,y_2)\\
     & = \int_{(\domain)^2} \nabla_wF[\m_s]((1-s)x_1+sx_2)\cdot(x_2-x_1) \ d\g (x_1,x_2).     
\end{flalign*}
For $s \in (0,1]$, it also follows from Lemma \ref{onesidedgeodesic} that there exists $\delta > 0$ such that $[s-\delta,s]\ni t\mapsto \m_t$ defines a geodesic. Consequently, (and using a similar reasoning to the calculation of the right-sided derivative) there exists an optimal plan $\g_{s,\delta} \in \G_o(\m_s,\m_{s-\delta})$ such that $((1-(s-t\delta))\pi^1 + (s-t\delta)\pi^2)_{\#}\g = \m_{s-t\delta} = ((1-t)\pi^1 + t\pi^2)_{\#}\g_{s,\delta}$ for all $t\in [0,1]$. Moreover, it follows from Lemma \ref{firstderivative} that
\[
 \frac{d}{dt}_{-}F(\m_t)\bigg|_{t=s} = -\frac{1}{\delta}\frac{d}{dt}_{-}F(\m_{s- t\delta})\bigg|_{t=0} = -\frac{1}{\delta}\int_{(\domain)^2} \nabla_wF[\m_s](y_1)\cdot(y_2-y_1) \ d\g_{s,\delta} (y_1,y_2). 
\]
As established in Equation \eqref{pushforward}, it follows that $\g_{s,\varepsilon} = [((1-s)\p^1 + s\p^2),((1-(s+\varepsilon))\p^1 + (s+\varepsilon)\p^2)]_{\#}\g$. By a similar reasoning, we deduce that $\g_{s,\delta} = [((1-s)\p^1 + s\p^2),((1-(s-\delta))\p^1 + (s-\delta)\p^2)]_{\#}\g$ and, consequently, 
\begin{flalign*}
     \frac{d}{dt}_{-}F(\m_t)\bigg|_{t=s} & = -\frac{1}{\delta}\int_{(\domain)^2} \nabla_wF[\m_s](y_1)\cdot(y_2-y_1) \ d\g_{s,\delta} (y_1,y_2)\\
     & = \int_{(\domain)^2} \nabla_wF[\m_s]((1-s)x_1+sx_2)\cdot(x_2-x_1) \ d\g (x_1,x_2).     
\end{flalign*}     
Since $s$ was an arbitrary point in the interval $(0,1)$, the left and right-sided derivatives of $t\mapsto F(\m_t)$ agree on $(0,1)$. Moreover, we conclude that $[0,1]\ni t\mapsto F(\m_t)$ is differentiable on $[0,1]$.
\end{proof}

\begin{lemma}\label{convexjoininglemma}
Let $f\co[a,c]\to\R$. If $f$ is differentiable on $[a,c]$ and there exists $b\in (a,c)$ such that the restriction of $f$ to $[a,b]$ and $[b,c]$ is convex, then $f$ is convex on $[a,c]$.
\end{lemma}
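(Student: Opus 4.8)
The plan is to use the first-order characterisation of convexity in terms of the monotonicity of the derivative, as captured in Lemma \ref{monotonederivative} (applied with $\l = 0$). That is, I would reduce the statement ``$f$ is convex on $[a,c]$'' to the inequality $(f'(x)-f'(y))(x-y) \geq 0$ for all $x,y \in [a,c]$, and then verify this inequality by splitting into cases according to which of the sub-intervals $[a,b]$ and $[b,c]$ the points $x$ and $y$ lie in.

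First I would observe that, by hypothesis, $f$ is differentiable on all of $[a,c]$, so in particular $f'$ exists at every point, including one-sided derivatives at the endpoints $a$ and $c$ which, by the differentiability assumption, coincide with the one-sided derivatives $\frac{df}{dx}_+\big|_{x=a}$ and $\frac{df}{dx}_-\big|_{x=c}$; crucially, the two-sided derivative $f'(b)$ exists at the junction point $b$, and it is simultaneously the right derivative of $f|_{[b,c]}$ at $b$ and the left derivative of $f|_{[a,b]}$ at $b$. Next, by Lemma \ref{monotonederivative} applied to the convex function $f|_{[a,b]}$, we get $(f'(x)-f'(y))(x-y) \geq 0$ whenever $x,y \in [a,b]$; applying it to $f|_{[b,c]}$ gives the same whenever $x,y \in [b,c]$. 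It remains to handle the mixed case, say $x \in [a,b]$ and $y \in [b,c]$ with $x \leq b \leq y$; then I would chain the monotonicity through $b$, writing $f'(x) \leq f'(b) \leq f'(y)$ — the first inequality from monotonicity of $f'$ on $[a,b]$ (which holds since $x \leq b$) and the second from monotonicity of $f'$ on $[b,c]$ (since $b \leq y$) — so that $f'(x) \leq f'(y)$ and hence $(f'(x)-f'(y))(x-y) \geq 0$, with the symmetric argument covering $x \in [b,c]$, $y \in [a,b]$. Since these cases exhaust all pairs in $[a,c]$, Lemma \ref{monotonederivative} (with $\l=0$) yields that $f$ is convex on $[a,c]$.

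The main subtlety — and the only place where the hypotheses are genuinely used — is the behaviour at the junction point $b$: the argument works precisely because $f$ is \emph{differentiable} at $b$, so the left derivative of $f|_{[a,b]}$ at $b$ and the right derivative of $f|_{[b,c]}$ at $b$ are both equal to the single number $f'(b)$. Without two-sided differentiability at $b$ one could have a convex ``cusp'' where the left slope exceeds the right slope, and the chaining $f'(x) \leq f'(b) \leq f'(y)$ would fail. I would make this point explicit in the write-up, since it is exactly the phenomenon the surrounding discussion (the ``non-convex cusp at $t=s$'') is designed to rule out. Everything else is a routine case analysis, so I do not anticipate any real obstacle beyond stating this junction argument cleanly.
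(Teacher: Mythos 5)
Your proposal is correct and follows essentially the same route as the paper: reduce convexity to the monotonicity inequality $(f'(x)-f'(y))(x-y)\geq 0$ (the paper cites the same equivalence and uses Lemma \ref{monotonederivative}), handle pairs lying in a single sub-interval directly, and chain $f'(x)\leq f'(b)\leq f'(y)$ through the junction point $b$ in the mixed case, with two-sided differentiability at $b$ being exactly the point that rules out a cusp. No substantive difference from the paper's argument.
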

\begin{proof}
Since $f$ is differentiable, it follows that $f$ is convex on $[a,c]$ if and only $(f'(x)-f'(y))(x-y)\geq 0$ for all $a< x,y< c$ (cf. \cite[Theorem 12.18]{Binmore}). Without loss of generality, we fix $ a < x \leq y < c$. If $x,y \geq b$ or $x,y \leq b$ then, since $f$ is convex on $[a,b]$ and $[b,c]$, it follows as a consequence of \cite[Theorem 12.18]{Binmore} that $(f'(x)-f'(y))(x-y)\geq 0$. Subsequently, we consider the case $x\leq b \leq y$. Since $f$ is convex on $[a,b]$ and $[b,c]$, it again follows from Lemma \ref{monotonederivative} that $(f'(y)-f'(b))(y-b)\geq 0$ and $(f'(b)-f'(x))(b-x)\geq 0$. Moreover, since $x\leq b \leq y$, we know that $f'(y) \geq f'(b) \geq f'(x)$ and, consequently, $(f'(y)-f'(x))(y-x) \geq 0$. We conclude that $f$ must be convex on $[a,c]$.
\end{proof}

\begin{proof}[\hypertarget{proof:theoremone} Proof of Theorem \ref{geodesicequivalencetheorem}]
    The set of acceleration-free curves contains every generalised geodesic and the set of generalised geodesics contains every geodesic. Consequently, if $F$ is $\l$-convex along acceleration-free curves, it must also be $\l$-convex along generalised geodesics and, similarly, if $F$ is $\l$-convex along generalised geodesics then $F$ must also be $\l$-geodesically convex. It is left to show that $\l$-geodesic convexity implies $\l$-convexity along acceleration-free curves. We first consider the case in which $\l = 0$.
    
    Let $[0,1]\ni t\mapsto \m_t$ be an acceleration-free curve between $\m_1,\m_2 \in \sP_2^d(\domain)$. Given $s\in (0,1)$, Lemma \ref{onesidedgeodesic} implies that there exist $\delta,\varepsilon > 0$ such that the maps $[s-\delta,s]\ni t\mapsto \m_t$ and $[s,s+\varepsilon]\ni t\mapsto \m_t$ define geodesics. Moreover, since we assume $F$ to be geodesically convex, the restriction of $[0,1]\ni t\mapsto F(\m_t)$ to the intervals $[s-\delta,s]$ and $[s,s+\varepsilon]$ must be convex. By Lemma \ref{accelerationfreederivative}, the map $[0,1]\ni t\mapsto F(\m_t)$ is differentiable on $[0,1]$ and, consequently, we conclude from Lemma \ref{convexjoininglemma} that the restriction of the map $[0,1]\ni t\mapsto F(\m_t)$ to the interval $[s-\delta,s+\varepsilon]$ is also convex. Since the map $t\mapsto F(\m_t)$ is continuous on $[0,1]$ and convex on a neighbourhood of $s$ for every $s \in (0,1)$, we conclude that the map $[0,1]\ni t\mapsto F(\m_t)$ is convex. Now, since $t\mapsto \m_t$ was an arbitrary acceleration-free curve between measures in $\sP_2^d(\domain)$, we conclude that the restriction of $F$ to $\sP_2^d(\domain)$ must be convex along acceleration-free curves. Moreover, since $F$ is differentiable it must also be continuous, and so, as a consequence of Lemma \ref{empiricalapproximation}, $F$ must also be convex along all acceleration-free curves. 

    We now assume that $F\co\sP_2(\domain)\to\R$ is differentiable and $\l$-geodesically convex for $\l \neq 0$. By Remark \ref{lambdaconvexitycriterion}, the map $\sP_2(\domain)\ni\m\mapsto F(\m) - \frac{\l}{2}\int_{\domain}|x|^2 d\m (x)$ is differentiable and geodesically convex. As we have shown, this implies that the map $\sP_2(\domain)\ni\m\mapsto F(\m) - \frac{\l}{2}\int_{\domain}|x|^2 d\m (x)$ is convex along acceleration-free curves and so we conclude by Remark \ref{lambdaconvexitycriterion} that $F\co\sP_2(\domain)\to\R$ must be $\l$-convex along acceleration-free curves.
    \end{proof}
In the following example, we construct a function on $\sP_2(\R)$ which is both geodesically convex and continuous but not convex along acceleration-free curves. This demonstrates that, in general, one can not hope to relax the assumption of Theorem \ref{geodesicequivalencetheorem} from differentiability to continuity. 

\begin{example}\label{counterexample}
Let $\varepsilon> 0$. We define $\sW_\varepsilon\co \sP_2(\R)\to \R$, 
\[
\sW_\varepsilon(\m) := \int_{\R}\int_{\R} W_\varepsilon(x,y) \ d\m(x) \ d\m(y), \quad W_\varepsilon(x,y) := \left\{ \begin{aligned}
& \varepsilon - |x-y| \ \text{if} \ |x-y| \leq \varepsilon,\\
    & 0 \ \text{otherwise}.
\end{aligned} \right.
\]
We also recall the definition of the sets $\Delta^+$ and $\Delta^-$ from  \cite[Proposition 7.25]{Santambrogio2015}.
\[
    \Delta^+ := \{(x,y) \in \R^2, y \geq x\}, 
    \quad 
    \Delta^- := \{(x,y) \in \R^2, y \leq  x\}.
    \]
Since $W_\varepsilon$ is continuous and bounded, it follows from \cite[Proposition 7.2]{Santambrogio2015}, that $\sW_\varepsilon$ is continuous on $(\sP_2(\R),W_2)$. In addition, since the restriction of $W_\varepsilon$ to the sets $\Delta^+$ and $\Delta^-$ is convex, it follows from \cite[Proposition 7.25]{Santambrogio2015}, that $\sW_\varepsilon$ geodesically convex. However, as we now show, the function $\sW_\varepsilon$ is not convex along acceleration-free curves.
   Consider the measures $\m := \frac{1}{2}\delta_x + \frac{1}{2}\delta_{x'}$ and $\n := \frac{1}{2}\delta_{y}+ \frac{1}{2}\delta_{y'}$ and let $\g\in \G(\m,\n)$ be the plan such that $\supp(\g) = \{(x,y), (x',y')\}$. Furthermore, define $[0,1]\ni t\mapsto \m_t:= ((1-t)\pi^1 + t\pi^2)_{\#}\g$. 
  \begin{flalign*}
    \sW_\varepsilon(\m_t) &  = \frac{1}{2}W_\varepsilon((1-t)x+ty,(1-t)x'+ty') \\
      & + \frac{1}{4}W_\varepsilon((1-t)x+ty,(1-t)x+ty) + \frac{1}{4}W_\varepsilon((1-t)x'+ty',(1-t)x'+ty') \\
      & = \frac{1}{2}(W_\varepsilon((1-t)x+ty,(1-t)x'+ty')+\varepsilon).
  \end{flalign*}
  
  In particular, if we choose $(x,x')$ and $(y,y')$ such that $x-x' = C \geq \varepsilon$ and $y-y' = - C \leq -\varepsilon$ then $\sW_\varepsilon(\m) = \sW_\varepsilon(\n) = \frac{1}{2}\varepsilon$. However, it also follows that 
  \[
  \sW_\varepsilon(\m_\frac{1}{2}) = \frac{1}{2}(\varepsilon- \frac{1}{2}|(x-x')+(y-y')|+\varepsilon) = \frac{1}{2}(\varepsilon + \varepsilon) = \varepsilon.
  \]
  Since $\sW_\varepsilon(\m_\frac{1}{2}) > \frac{1}{2}(\sW_\varepsilon(\m)+ \sW_\varepsilon(\n))$, we conclude that $\sW_\varepsilon$ is not convex along acceleration-free curves.

  To emphasise that the function $\sW_\varepsilon$ is not differentiable, we pick $x,x',y,y'\in \R$ such that $x<x'$ and $y<y'$ and, subsequently, consider the geodesic $[0,1]\ni t\mapsto \m_t := \frac{1}{2}(\delta_{(1-t)x + ty} + \delta_{(1-t)x' + ty'})$.
  If $\sW_\varepsilon$ was differentiable on $\sP_2(\R)$ then, by Lemma \ref{firstderivative}, the map $[0,1]\ni t\mapsto \sW_\varepsilon(\m_t)$ would also be differentiable. However, if we choose $x' = x+\frac{\varepsilon}{2}$ and $y' = y + 
 2\varepsilon$, it follows that  
 \begin{flalign*}\sW_\varepsilon(\m_t) &  = \frac{1}{2}(W_\varepsilon((1-t)x+ty,(1-t)x'+ty')+\varepsilon)\\ & 
 = \frac{1}{2}(W_\varepsilon((1-t)x+ty,(1-t)x+ty +\frac{1+3t}{2}\varepsilon )+\varepsilon) = \left\{ \begin{aligned}
& \frac{3(1-t)}{4}\varepsilon \ \text{if} \ t \leq \frac{1}{3},\\
    & \frac{1}{2}\varepsilon \ \text{otherwise}.
\end{aligned} \right.
 \end{flalign*}
Moreover, since $[0,1]\ni t\mapsto \sW_\varepsilon(\m_t)$ is not differentiable, we conclude that $\sW_\varepsilon$ is not differentiable on $\sP_2(\R)$ either.
\end{example} 
\section{First and Second Order Convexity Criteria}
\label{sec:two}
In this section, we prove Theorems \ref{convexityone} and \ref{convexitytwo}. Whilst the proof of Theorem \ref{convexityone} is relatively self-contained, in contrast, and, as we outlined in \hyperref[subsec:two]{Subsection 1.4}, the proof of Theorem \ref{convexitytwo} requires us to first establish a number of helpful lemmas. 


\subsection*{First Order Convexity Criteria}

\begin{proof}[Proof of Theorem \ref{convexityone}]
     Firstly, we assume that $F$ is differentiable and $\l$-geodesically convex and show that Equation \eqref{displacementmonotone} holds. 
    If $F$ is differentiable and $\l$-geodesically convex then, for any geodesic $[0,1]\ni t\mapsto \m_t$ with endpoints $\m_1,\m_2\in \sP_2(\domain)$, the map $[0,1]\ni t\mapsto F(\mu_t)$ is differentiable by Lemma \ref{firstderivative} and $\l W_2^2(\m_1,\m_2)$-convex by Remark \ref{everygeodesicremark}. 
    Using the convexity characterisation of Lemma \ref{monotonederivative}, the following inequality holds. 
    \begin{equation}\label{onesidedinequality}
    \frac{d}{dt}_{-}F(\m_t) \bigg|_{t=1}-\frac{d}{dt}_{+}F(\m_t) \bigg|_{t=0}\geq\l W_{2}^2(\m_1,\m_2). 
    \end{equation}
    Furthermore, using Lemma \ref{firstderivative} to characterise the one-sided derivatives of $t\mapsto F(\m_t)$, we conclude that Equation \eqref{onesidedinequality} and Equation \eqref{displacementmonotone} are equivalent and, since the choice of geodesic was arbitrary, we conclude that \eqref{displacementmonotone} must hold for all $\m_1,\m_2$ and all $\g \in \G_o(\m_1,\m_2)$.
    
    We now assume that $F$ is differentiable and that Equation \eqref{displacementmonotone} holds for all $\m_1,\m_2$ and all $\g \in \G_o(\m_1,\m_2)$ and show that $F$ is also $\l$-geodesically convex. 
    Let $[0,1]\ni t\mapsto \m_t$ be a geodesic with endpoints $\m_1,\m_2\in \sP_2(\domain)$. By Lemma \ref{geodesicremark}, the curve $[0,1]\ni t\mapsto \mu_t^{r,s}:= \mu\circ ((r-s)t+s)$ must be the unique geodesic between $\m_s$ and $\m_r$. 
    Moreover, since Equation \eqref{onesidedinequality} and Equation \eqref{displacementmonotone} are equivalent, the following inequality must hold for all $0 \leq s<r \leq 1$.
    \begin{equation}\label{rsinequality}
    \frac{d}{dt}_{-}F(\mu_t^{r,s})\bigg|_{t=1} -\frac{d}{dt}_{+}F(\mu_t^{r,s})\bigg|_{t=0} 
    \geq \l W_{2}^2(\m_r,\m_s).      
    \end{equation}
    Subsequently, the following inequality must hold for any $0\leq s<r\leq 1$.
    \begin{flalign*}
    \frac{d}{dt}F(\mu_t)\bigg|_{t=r} -\frac{d}{dt}F(\mu_t)\bigg|_{t=s} & = 
    \frac{1}{r-s}\bigg(\frac{d}{dt}_{-}F(\mu_t^{r,s})\bigg|_{t=1} -\frac{d}{dt}_{+}F(\mu_t^{r,s})\bigg|_{t=0}\bigg) \\
    & \geq \l \frac{r-s}{(r-s)^2}W_{2}^2(\m_r,\m_s) = \l W_2^2(\m_1,\m_2)(r-s).          
    \end{flalign*}
    Since the choice of geodesic was arbitrary, we conclude that there exists a geodesic $[0,1]\ni t\mapsto \m_t$ between any pair of measures $\m_1,\m_2 \in \sP_2(\domain)$ such that the function $[0,1]\ni t\mapsto F(\mu_t)$ is $\l W_2^2(\m_1,\m_2)$ convex. 
    Subsequently, we conclude that $F$ is $\l$-geodesically convex.
\end{proof}
\subsection*{Second Order Convexity Criteria.}
To begin this subsection, we first calculate the second derivative of the map $t\mapsto F(\m_t)$ when $F$ is twice differentiable and $[0,1]\ni t\mapsto \m_t$ defines a suitably `smooth' geodesic.

\begin{lemma}\label{secondderivative}
Let $F\co\sP_2(\domain)\to\R$ be twice differentiable on $\sP_2(\domain)$ and let $\m_1,\m_2\in \sP_2(\domain)$ be measures for which there exists an optimal map $T$ between $\m_1$ and $\m_2$. 
Let $[0,1]\ni t \mapsto \m_t:= ((1-t)id +tT)_{\#}\m_1$ and let $T_t$ denote the optimal map between $\m_t$ and $\m_2$.
If there exists $\varphi\in C_c^\infty(\domain)$ and $U$, a convex neighbourhood of $\Hull(\supp(\m_1))$, such that the restriction of $\varphi$ to $U$ is $(-1)$-convex and such that $T-id = \nabla \varphi$, then the map $[0,1]\ni t\mapsto F( \m_t)$ is twice differentiable on $[0,1)$. 
In particular,
\[
\frac{d^2}{dt^2} F(\m_t)  = \frac{1}{(1-t)^2}Hess F[ \m_t ](T_t-id,T_t-id).
\]
\end{lemma}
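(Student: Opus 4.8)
The plan is to differentiate the first-order formula from Lemma \ref{firstderivative} a second time, using the fact that along the curve $t\mapsto\m_t$ the optimal map from $\m_t$ to $\m_2$ is the explicit rescaling $T_t = id + \frac{1}{1-t}(T-id) = id + \frac{1}{1-t}\nabla\varphi$, so that $T_t - id = \frac{1}{1-t}\nabla\varphi$ pushed forward appropriately. First I would record that, since $\varphi$ is $(-1)$-convex on the convex neighbourhood $U$ of $\Hull(\supp(\m_1))$, the map $id + s\nabla\varphi$ is the gradient of a convex function (namely $x\mapsto \frac{|x|^2}{2} + s\varphi(x)$, which is convex on $U$ for $s\in[0,1]$) and hence is the optimal map from $\m_1$ to $\m_s$; the analogous statement for the maps between $\m_t$ and $\m_r$ for $0\le t\le r<1$ gives that $t\mapsto\m_t$ is a geodesic on $[0,1)$ in the strong sense that all intermediate optimal maps are explicit. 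In particular $T_t - id = \frac{1}{1-t}\nabla\varphi\circ(\,(1-t)id + tT\,)^{-1}$, i.e. as an element of $L^2(\m_t)$ it is the pushforward under $(1-t)id+tT$ of the vector field $\frac{1}{1-t}\nabla\varphi\in L^2(\m_1)$.

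Next I would apply Lemma \ref{firstderivative} (with $\g = (id\times T)_\#\m_1$, which is optimal) to write
\[
\frac{d}{dt}F(\m_t) = \int_{\domain} \nabla_wF[\m_t]\big((1-t)x + tT(x)\big)\cdot\big(T(x)-x\big)\,d\m_1(x).
\]
Changing variables via the map $S_t := (1-t)id + tT$ (which is a bijection onto its image since it is a gradient of a strictly convex function on $U$), this becomes $\int_{\domain}\nabla_wF[\m_t](y)\cdot(T_t(y)-y)\,d\m_t(y)$ because $T(x)-x = \frac{1}{1-t}(T(x)-x)\cdot(1-t)$ and $(T_t\circ S_t)(x) = S_t(x) + \frac{1}{1-t}(T(x)-x) = x + \frac{1}{1-t}\cdot(1-t)(T(x)-x)\cdot\frac{1}{1-t}$... more cleanly, $\frac{1}{1-t}(T_t(y)-y) = T(x)-x$ when $y = S_t(x)$, so the integrand equals $\frac{1}{1-t}\nabla_wF[\m_t](y)\cdot(T_t(y)-y)$. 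Then I differentiate this expression in $t$: one contribution comes from the explicit factor $\frac{1}{1-t}$ (via differentiating the displacement $\frac{d}{dt}(T_t - id)$ along the curve, which by the chain rule for transport contributes the covariant-derivative correction term $\nabla(T_t-id)(T_t-id)$), and the other from differentiating $t\mapsto \nabla_wF[\m_t]$ along the curve, which by hypothesis (twice differentiability, Definition \ref{Hessiandefn}) contributes $\z\cdot(\z\cdot F)$ with $\z = T_t - id$ scaled by $\frac{1}{1-t}$. Collecting the two terms and invoking the definition $\bar Hess F[\m_t](\z,\z) = \z\cdot(\z\cdot F[\m_t]) - (\nabla\z\,\z)\cdot F[\m_t]$, the covariant correction cancels exactly against the term produced by differentiating the reparametrisation factor, leaving $\frac{1}{(1-t)^2}Hess F[\m_t](T_t-id,T_t-id)$.

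The main obstacle I anticipate is making the differentiation of $t\mapsto\nabla_wF[\m_t]\in L^2(\m_t)$ rigorous: the gradient lives in a $t$-dependent Hilbert space, so I must transport everything back to the fixed space $L^2(\m_1)$ via $S_t$, verify that $T_t - id \in \nabla C_c^\infty(\domain)$ (or at least lies in $T_{\m_t}\sP_2$ so that the Hessian is defined there and the extension in Definition \ref{Hessiandefn} applies — here is exactly where the hypothesis $T - id = \nabla\varphi$ with $\varphi\in C_c^\infty$ is used, together with a check that the rescaled, pulled-back field is still of the form $\nabla\psi$ for some $\psi\in C_c^\infty$ on a neighbourhood of the relevant support, using the convexity/invertibility of $S_t$), and then justify exchanging the limit defining the $t$-derivative with the integral. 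The $(-1)$-convexity of $\varphi$ on $U$ is what guarantees $S_t$ is a diffeomorphism onto its image for all $t\in[0,1)$ and, crucially, what keeps $\|T_t - id\|_{L^2(\m_t)}$ and the Hessian bound under control as $t\to 1^-$ is not needed since we only claim the result on $[0,1)$; but the blow-up of $\frac{1}{(1-t)^2}$ is consistent with $T_t - id$ growing, and I would note that the product $\frac{1}{(1-t)^2}Hess F[\m_t](T_t-id,T_t-id)$ is the intrinsic second derivative, independent of parametrisation. Once the differentiation-under-the-integral and the identification of $\frac{d}{dt}\nabla_wF[\m_t]$ with the directional derivative of the differential are justified, the cancellation is a short computation.
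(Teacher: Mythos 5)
Your strategy is the same as the paper's (exploit the explicit structure $\m_t=(id+t\nabla\varphi)_{\#}\m_1$, write the first derivative as the pairing of $\nabla_wF[\m_t]$ with the rescaled displacement, differentiate once more, and match the extra transport term with the correction term in Definition \ref{Hessiandefn}), but the two steps you defer as ``anticipated obstacles'' are precisely the substance of the proof, so as written there is a genuine gap. First, the claim that the velocity field $\nabla\varphi\circ g_t^{-1}$, with $g_t:=id+t\nabla\varphi$, is again of the form $\nabla\psi$ for a smooth $\psi$ near $\supp(\m_t)$ is not automatic: composing a gradient with a diffeomorphism does not in general produce a gradient. The paper proves it by the explicit identity $\nabla\varphi\circ g_t^{-1}=\nabla\bigl(\tfrac{t}{2}|\nabla\varphi\circ g_t^{-1}|^2+\varphi\circ g_t^{-1}\bigr)$ on $g_t(U)$, using $(\nabla g_t)^{-1}=(I_d+t\nabla^2\varphi\circ g_t^{-1})^{-1}$, and only then extends this to some $f_t\in C_c^\infty(\domain)$ so that the directional derivatives $\nabla f_t\cdot(\nabla f_t\cdot F[\m_t])$ and the bilinear form $\bar{H}essF[\m_t]$ are actually defined on the field in question. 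Without this verification your manipulation of $\frac{d}{dt}\nabla_wF[\m_t]$ in the direction $T_t-id$ is not justified. Second, the ``cancellation'' you invoke is really the Eulerian identity $\partial_t\nabla f_t+\nabla^2 f_t\,\nabla f_t=0$ ($\m_t$-a.e.), i.e.\ the interpolation's velocity field solves the pressureless Burgers equation; this must be derived (the paper does it via two chain-rule computations for $g_t^{-1}$), and it is exactly what identifies the term coming from the time-dependence of the field with $-(\nabla\z\,\z)\cdot F[\m_t]$ in the definition of $\bar{H}ess$ — it is an identification, not a cancellation against the factor $\frac{1}{1-t}$.

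Two smaller points. With $S_t:=(1-t)id+tT$ one has $T_t\circ S_t=T$, hence $T_t-id=(1-t)\,\nabla\varphi\circ S_t^{-1}$, not $T_t-id=\frac{1}{1-t}\nabla\varphi\circ S_t^{-1}$ as you first write; your later identity $\frac{1}{1-t}\bigl(T_t(y)-y\bigr)=T(x)-x$ for $y=S_t(x)$ is the correct one and is what produces the factor $\frac{1}{(1-t)^2}$. Also, the identification of the optimal maps between intermediate measures should be justified rather than asserted: the paper either checks optimality of $id+(s-t)\nabla\varphi\circ g_t^{-1}$ by a direct cost computation against $W_2(\m_t,\m_s)=|s-t|\,W_2(\m_1,\m_2)$, or one can invoke Lemma \ref{geodesicremark}; your ``gradient of a convex function'' route again needs the gradient structure of $\nabla\varphi\circ g_t^{-1}$ from the first point.
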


\begin{proof}
     For $t\in [0,1)$, we define $g_t\co U\to \domain, \ g_t(x) := \nabla(\frac{1}{2}|x|^2 + t\varphi(x))$. 
     Since $g_t$ is the gradient of a smooth strictly convex function, $g_t$ is invertible and its inverse $g_t^{-1}\co g_t(U) \to U$ is smooth. 
     In particular, using the Inverse Function Theorem, the following system of equalities holds.
    \begin{equation}\label{nablaccinfinity}
    \begin{split}
    & \nabla (\frac{t}{2} |\nabla\varphi\circ g_t^{-1}|^2 + \varphi\circ g_t^{-1}) = \nabla (g_t^{-1})(t (\nabla^2\varphi\circ g_t^{-1})\nabla\varphi\circ g_t^{-1} + \nabla\varphi\circ g_t^{-1}) \\
    & = (\nabla g_t)^{-1}((t\nabla^2\varphi\circ g_t^{-1} + I_{d})\nabla\varphi\circ g_t^{-1}) =  (\nabla (t\nabla\varphi+id))^{-1}((t\nabla^2\varphi\circ g_t^{-1} + I_{d})\nabla\varphi\circ g_t^{-1})  \\ 
   & = (t\nabla^2\varphi\circ g_t^{-1} + I_{d})^{-1}(t\nabla^2\varphi\circ g_t^{-1} + I_{d})(\nabla\varphi\circ g_t^{-1}) = \nabla\varphi\circ g_t^{-1}.    
    \end{split}
    \end{equation}
    From Equation \eqref{nablaccinfinity}, it follows that $\nabla\varphi \circ \ g_t^{-1} \in \nabla C^\infty_c(g_t(U))$.
    Moreover, there exists $f_t\in C_c^\infty(\domain)$ such that $\nabla f_t|_{g_t(U)} = \nabla\varphi \circ g_t^{-1}$. 
    It also follows from Lemma \ref{geodesicremark} that there is a unique optimal plan between $\m_t$ and $\m_s$ for all $s\in[0,1]$ and $t\in[0,1)$. 
    In the following system of equalities, we show that this optimal plan is induced by an optimal map of the form $id + (s-t)(\nabla\varphi\circ g_t^{-1})$.
    \begin{flalign*}
    \int_{\domain} |id - & (id + (s-t)(\nabla\varphi\circ g_t^{-1}))|^2 \ d\m_t =  (s-t)^2 \int_{\domain} |\nabla\varphi \circ g_t^{-1}|^2 \  d\m_t \\
    = & (s-t)^2  \int_{\domain} |(T - id) \circ [(id + t(T-id))^{-1}]|^2 \ d\m_t \\
     = & (s-t)^2  \int_{\domain} |(T - id) \circ [(id + t(T-id))^{-1}]|^2 \ d(id + t(T-id))_{\#}\m_1 \\
    = & (s-t)^2  \int_{\domain} |T-id|^2 \ d\m_1 = (s-t)^2 W_2^2(\m_1,\m_2) = W_2^2(\m_t,\m_s) .
    \end{flalign*}
    Now, since $U$ is a neighbourhood of $\Hull(\supp(\m_1))$ and $\m_t = (g_t)_{\#}\m_1$, it follows that $\supp(\m_t)$ is a subset of $g_t(U)$. Moreover, since the optimal map $id + (s-t)(\nabla\varphi\circ g_t^{-1})$ is only uniquely defined $\m_t$-almost everywhere and since $\nabla f_t|_{g_t(U)} = \nabla\varphi \circ g_t^{-1}$, we may identify the optimal map with $id+(s-t)\nabla f_t$. In particular, this means that $T_t = id+(1-t)\nabla f_t$.

    Utilising Definition \ref{differentiability}, we calculate the first derivative of $F\circ\m_t$ as follows.
    \begin{flalign*}
    \frac{d}{dt} F(\m_t) & = \lim_{h\to0}\frac{1}{h}(F(\m_{t+h}) - F(\m_{t}))  = \lim_{h\to0}\frac{h}{h}\int_{\domain}\nabla_wF[\m_t]\cdot \nabla f_t  \ d\m_t + \lim_{h\to0}\frac{o(W_2(\m_{t+h},\m_{t}))}{h}\\
    & 
    =  \int_{\domain}\nabla_wF[\m_t]\cdot \nabla f_t \ d\m_t  = \nabla f_t \cdot F[\m_t] =  \frac{1}{1-t}(T_t-id) \cdot F[\m_t].
   \end{flalign*}
   Since $F$ is twice differentiable, it follows from Definition \ref{Hessiandefn} that the map $t \mapsto \z \cdot F[\m_t]$ is differentiable for all $\z\in \nabla C_c^\infty(\domain)$. 
   In particular, for $s,r\in[0,1)$,
    \[
   \frac{d}{dt} \nabla f_s \cdot F[\m_t]\bigg|_{t=r} = \nabla f_r \cdot(\nabla f_s \cdot F[\m_r]).
   \]
   We calculate the second derivative as follows.
   \begin{flalign*}
    \frac{d^2}{dt^2}F(\m_t) & = \lim_{h\to0}\frac{1}{h}(\nabla f_{t+h}\cdot F[\m_{t+h}] -  \nabla f_{t}\cdot F[\m_t]) \\
    & =  \lim_{h\to0}\frac{1}{h}(\nabla f_{t+h}\cdot F[\m_{t+h}] - \nabla f_{t+h} \cdot F[\m_t]) +\lim_{h\to0}\frac{1}{h}(\nabla f_{t+h} \cdot F[\m_t] - \nabla f_t\cdot F[\m_t]) \\
    & = \lim_{s\to t}\frac{d}{dt} (\nabla f_s\cdot \nabla_w F[\m_t]) +  (\partial_t \nabla f_t) \cdot F[\m_t] = \nabla f_t\cdot (\nabla f_t\cdot \nabla_w F[\m_t]) +  (\partial_t \nabla f_t) \cdot F[\m_t]. 
    \end{flalign*}
    Since $\nabla f_t|_{g_t(U)} = \nabla\varphi \circ g_t^{-1}$ and via the application of the chain rule, the following system of equalities holds $\m_t$-almost everywhere.
    \begin{equation}\label{equation1}
    \begin{split}
    & \partial_t \nabla f_t + \nabla^2 f_t \nabla f_t = \partial_t(\nabla\varphi\circ g_t^{-1}) +  \nabla(\nabla\varphi\circ g_t^{-1})\nabla\varphi\circ g_t^{-1} \\
    & = (\nabla^2 \varphi\circ g_t^{-1})(\partial_t (g_t^{-1}) + \nabla (g_t^{-1})(\nabla\varphi\circ g_t^{-1}))
     = (\nabla^2 \varphi\circ g_t^{-1})(\partial_t (g_t^{-1}) + \nabla (g_t^{-1})(\partial_t g_t \circ g_t^{-1})).
    \end{split}
    \end{equation}
    Via the application of the chain rule, we also derive Equation \eqref{equation2}.
    \begin{equation}\label{equation2}
    \begin{split}
    0 & = 0 \circ g_t^{-1}  = \partial_t id \circ g_t^{-1} = \partial_t (g_t^{-1}\circ g_t)\circ g_t^{-1}\\
    & =  (\partial_t g_t^{-1} )\circ g_t \circ g_t^{-1} + (\nabla (g_t^{-1})\circ g_t\circ g_t^{-1})(\partial_t g_t) \circ g_t^{-1}=  \partial_t(g_t^{-1}) + \nabla g_t^{-1}(\partial_t g_t \circ g_t^{-1})
    \end{split}
    \end{equation}
    By substituting Equation \eqref{equation2} into Equation \eqref{equation1}, it follows that $\partial_t \nabla f_t + \nabla^2 f_t \nabla f_t = 0$, $\m_t$-almost everywhere.
    We conclude as follows.
    \begin{flalign*}
    \frac{d^2}{dt^2} F(\m_t) & =   \nabla f_t\cdot (\nabla f_t\cdot \nabla_w F[\m_t]) +  (\partial_t\nabla f_t)\cdot F[\m_t] 
    = \nabla f_t\cdot (\nabla f_t\cdot \nabla_w F[\m_t]) -  \nabla^2 f_t \nabla f_t  \cdot F[\m_t]\\
    & = \bar{H}essF[ \m_t ]( \nabla f_t, \nabla f_t) = \frac{1}{(1-t)^2}Hess F[ \m_t ](T_t-id,T_t-id).
    \end{flalign*}
\end{proof}

\begin{lemma}\label{approximationlemma}
Let $\m,\n\in\sP_2^{rc}(\domain)$ and let $T$ denote the optimal map between $\m$ and $\n$. 
There exists a sequence of $C_c^\infty(\domain)$ functions $(\varphi_n)_{n\in\N}$ and $U_n$, a convex neighbourhood of $\Hull(\supp(\m))$, such that each $\varphi_n$ is $(-1)$-convex on $U_n$ and $(\nabla \varphi_n + id) \to T$ in $L^2(\m;\domain)$ as $n\to\infty$.
\end{lemma}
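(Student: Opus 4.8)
The plan is to obtain the $\varphi_n$ by mollifying, and then cutting off, a Brenier potential for $T$. \emph{Step 1 (a globally Lipschitz potential).} Since $\m$ is absolutely continuous with respect to $\mathscr{L}^d$, Brenier's theorem provides a convex function $\psi$ with $T = \nabla\psi$ $\m$-a.e.; then $\varphi := \psi - \tfrac12|\cdot|^2$ is $(-1)$-convex and satisfies $\nabla\varphi + id = T$ $\m$-a.e. Because $\supp(\n)$ is compact, say $\supp(\n)\subseteq\overline{B_R(0)}$, one may in addition take $\psi$ to be finite and $R$-Lipschitz on all of $\domain$: replacing $\psi$ by the supremum of the affine maps $x\mapsto \psi(x')+\langle x-x',y\rangle$ over all $(x',y)$ in the (cyclically monotone) support of the optimal plan between $\m$ and $\n$ produces such a function, which still agrees with $\psi$ on that support and still has $y\in\partial\psi(x')$ there, hence still satisfies $T=\nabla\psi$ $\m$-a.e. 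In particular $\nabla\psi\in L^2(\m;\domain)$ with $|\nabla\psi|\le R$ a.e.

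\emph{Step 2 (mollification and cut-off).} Fix an open bounded convex neighbourhood $U$ of $\Hull(\supp(\m))$, for instance $U=\{x:\ \mathrm{dist}(x,\Hull(\supp(\m)))<r\}$, which is convex since $\mathrm{dist}(\cdot,\Hull(\supp(\m)))$ is a convex function, and fix $\chi\in C_c^\infty(\domain)$ with $0\le\chi\le1$ and $\chi\equiv 1$ on $\overline U$. Let $(\rho_n)_{n\in\N}$ be a standard mollifier sequence, set $\psi_n:=\psi*\rho_n$, and define
\[
\varphi_n := \chi\cdot\Big(\psi_n - \tfrac12|\cdot|^2\Big)\in C_c^\infty(\domain), \qquad U_n := U.
\]
On $U$ we have $\chi\equiv1$, hence $\varphi_n+\tfrac12|\cdot|^2=\psi_n$ there; since the convolution of a convex function with a non-negative kernel is convex, $\psi_n$ is convex, so each $\varphi_n$ is $(-1)$-convex on the convex neighbourhood $U_n$ of $\Hull(\supp(\m))$, as required.

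\emph{Step 3 (convergence in $L^2(\m;\domain)$).} Since $\supp(\m)\subseteq U$ and $\chi\equiv1$ on $U$, on $\supp(\m)$ we have $\nabla\varphi_n+id=\nabla\psi_n=(\nabla\psi)*\rho_n$, so that, using $T=\nabla\psi$ $\m$-a.e.,
\[
\|(\nabla\varphi_n+id)-T\|_{L^2(\m;\domain)} = \|(\nabla\psi)*\rho_n-\nabla\psi\|_{L^2(\m;\domain)}.
\]
At every Lebesgue point of $\nabla\psi$ one has $(\nabla\psi)*\rho_n\to\nabla\psi$, so this convergence holds $\mathscr{L}^d$-a.e. and hence $\m$-a.e. because $\m\ll\mathscr{L}^d$; since moreover $|(\nabla\psi)*\rho_n|\le R$ uniformly and $\m$ is a probability measure, dominated convergence gives $(\nabla\psi)*\rho_n\to\nabla\psi$ in $L^2(\m;\domain)$, that is, $(\nabla\varphi_n+id)\to T$ in $L^2(\m;\domain)$.

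I expect the main obstacle to be Step 1. An arbitrary Brenier potential is only guaranteed to be finite on (roughly) the interior of $\Hull(\supp(\m))$ and may fail to be Lipschitz up to its boundary, which would prevent mollifying it on a genuine neighbourhood $U_n$ of the \emph{closed} convex hull and would also leave the mollified gradients unbounded there. Exploiting the compactness of $\supp(\n)$ to pass to a globally $R$-Lipschitz representative of $\psi$ removes this difficulty; everything else is the routine mollification estimate in the spirit of \cite[Proposition 8.5.2]{Ambrosio2008}.
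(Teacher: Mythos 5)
Your proposal is correct and follows essentially the same route as the paper: mollify a Brenier potential, subtract $\tfrac12|x|^2$, cut off outside a convex neighbourhood of $\Hull(\supp(\m))$, use that mollification preserves convexity, and conclude by dominated convergence using the uniform bound on the gradients coming from the compactness of $\supp(\n)$. The only (harmless) difference is that you build a globally $R$-Lipschitz potential by hand via supporting affine functions, whereas the paper simply invokes \cite[Theorem 6.2.10]{Ambrosio2008} for the (local) Lipschitz property and the $\m$-a.e.\ boundedness of $T$, and uses a sequence of balls $U_n=B_{n+N}(0)$ with cutoffs in place of your single fixed neighbourhood.
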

\begin{proof}
In \cite{Ambrosio2008}, Theorem 6.2.10, it is shown that, for $\m \in \sP_2^r(\domain)$, there exists a convex function $\phi$ such that $\nabla \phi = T$ in $L^2(\m;\domain)$. 
It is also shown that, if $\n$ is compactly supported, then $\phi$ is locally Lipschitz. 
Since $\n$ is compactly supported, it also follows that $T$ is bounded $\m$-almost everywhere.
Let $\chi_\varepsilon$ be a positive mollifier and let $(\k_n)_{n\in\N}$ be a sequence of smooth cutoff functions such that $\k_n = 1$ and $\nabla\k_n = 0 $ on $B_{n}(0)$. 
Since $\m$ has compact support, there exists $N \in \N$ such that $\Hull(\supp(\m))\subset B_{n}(0)$ for all $n\geq N$. 
We fix such an $N$, we define $\varphi_n := \k_{n+N}(\chi_\frac{1}{n}\ast \phi - \frac{1}{2}|id|^2)$ and we let $U_n = B_{n+N}(0)$. 
Since mollification preserves convexity and $\k_{n+N}(x) =1$ for $x\in U_n$, each $\varphi_n$ is $(-1)$-convex on $U_n$ for all $n\in \N$. 
Now, using the convergence properties of a mollifier, the gradient $\nabla\varphi_n(x)$ converges to $T(x) - x$ for every $x\in \Hull(\supp(\m))$. 
Moreover, since $\m$ is absolutely continuous with respect to $\mathscr{L}^d$, it follows that $\nabla\varphi_n(x)$ converges to $T(x)-x$ for $\m$-almost every $x\in \domain$. 
Using this convergence and the boundedness of $T$, it follows that there exists $C\in \R$ such that $|\nabla\varphi_n| \leq C$ $\m$-almost everywhere. 
Moreover, by applying the Dominated Convergence Theorem to the sequence $\nabla\varphi_n$, it follows that $(\nabla \varphi_n + id)$ converges to $T$ in $L^2(\m;\domain)$ as $n\to\infty$. 
\end{proof}
Whilst the following results, Lemmas \ref{convergencelemma} and \ref{geodesicconvexityspace} are not used directly in the Proof of Theorem \ref{convexitytwo}, they are necessary results in the proof of Lemma \ref{convexitycriterion} which, in turn, is utilised in our proof of the main result. Moreover, whilst it is expected that the following two lemmas are well-known to experts, we include their proof for completeness.

\begin{lemma}\label{convergencelemma}
Let $(\m_{1,n})_{n\in\N}$ and $(\m_{2,n})_{n\in\N}$ be sequences in $\sP_2(\domain)$ and let $(\g_n)_{n\in\N}$ be a sequence such that $\g_n\in \G_o(\m_{1,n},\m_{2,n})$ for all $n \in \N$. 
If there exist $\m_1,\m_2\in \sP_2(\domain)$ such that $\lim_{n\to\infty}W_2(\m_1,\m_{1,n})= 0$ and $\lim_{n\to\infty}W_2(\m_2,\m_{2,n}) = 0$ then there exists $\g \in \G_o(\m_1,\m_2)$ such that, up to a subsequence, $(\g_n)_{n\in\N}$ narrowly converges to $\g$.
\end{lemma}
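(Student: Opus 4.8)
The plan is the standard stability-of-optimal-plans argument: establish tightness of $(\g_n)_{n\in\N}$, extract a narrowly convergent subsequence via Prokhorov, identify the limit as an admissible plan between $\m_1$ and $\m_2$, and then use lower semicontinuity of the transport cost together with continuity of $W_2$ to upgrade admissibility to optimality.

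First I would recall that $W_2$-convergence implies narrow convergence with convergence of second moments; in particular each of the families $(\m_{1,n})_{n\in\N}$ and $(\m_{2,n})_{n\in\N}$ is narrowly relatively compact, hence tight. Given $\e>0$, choose compact sets $K_1,K_2\subset\domain$ with $\m_{1,n}(\domain\setminus K_1)<\e/2$ and $\m_{2,n}(\domain\setminus K_2)<\e/2$ for every $n$. The marginal constraints on $\g_n$ give
\[
\g_n\big((\domain)^2\setminus(K_1\times K_2)\big)\le \m_{1,n}(\domain\setminus K_1)+\m_{2,n}(\domain\setminus K_2)<\e,
\]
so $(\g_n)_{n\in\N}$ is tight, and by Prokhorov's theorem there is a subsequence (not relabelled) and $\g\in\sP((\domain)^2)$ with $\g_n\weakly\g$ narrowly.

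Next I would identify the marginals. Since $\pi^1$ and $\pi^2$ are continuous, push-forward is narrowly continuous, so $\pi^1_\#\g_n\weakly\pi^1_\#\g$ and $\pi^2_\#\g_n\weakly\pi^2_\#\g$; but $\pi^1_\#\g_n=\m_{1,n}\weakly\m_1$ and $\pi^2_\#\g_n=\m_{2,n}\weakly\m_2$, and narrow limits are unique, so $\pi^1_\#\g=\m_1$ and $\pi^2_\#\g=\m_2$. Since $\m_1,\m_2\in\sP_2(\domain)$, the plan $\g$ has finite second moment and $\g\in\G(\m_1,\m_2)$. For optimality, the integrand $(x_1,x_2)\mapsto|x_1-x_2|^2$ is nonnegative and lower semicontinuous, so $\g\mapsto\int_{(\domain)^2}|x_1-x_2|^2\,d\g$ is lower semicontinuous for narrow convergence; combined with the optimality of each $\g_n$ and the continuity of $W_2$ (triangle inequality applied to the $W_2$-convergent sequences, giving $W_2(\m_{1,n},\m_{2,n})\to W_2(\m_1,\m_2)$), we obtain
\[
\int_{(\domain)^2}|x_1-x_2|^2\,d\g\le\liminf_{n\to\infty}\int_{(\domain)^2}|x_1-x_2|^2\,d\g_n=\lim_{n\to\infty}W_2^2(\m_{1,n},\m_{2,n})=W_2^2(\m_1,\m_2).
\]
Since $\g\in\G(\m_1,\m_2)$, the reverse inequality is the definition of $W_2$, so $\g\in\G_o(\m_1,\m_2)$.

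All of the steps here are essentially routine. If I had to single out the point needing the most care, it is the lower semicontinuity of $\g\mapsto\int|x_1-x_2|^2\,d\g$ along a narrowly convergent sequence — this uses that the cost is lower semicontinuous and bounded below, and is proved by approximating the cost from below by bounded continuous functions and applying the monotone convergence theorem — together with ensuring that $W_2^2(\m_{1,n},\m_{2,n})$ actually converges to $W_2^2(\m_1,\m_2)$ rather than only having a $\liminf$, which is immediate from $|W_2(\m_{1,n},\m_{2,n})-W_2(\m_1,\m_2)|\le W_2(\m_{1,n},\m_1)+W_2(\m_{2,n},\m_2)\to0$.
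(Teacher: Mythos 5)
Your proof is correct and in substance takes the same route as the paper: the paper's proof simply cites \cite[Propositions 7.1.3 and 7.1.5]{Ambrosio2008}, which state that $W_2$-convergence of the marginals gives narrow convergence (with uniformly bounded second moments), that the plans $(\g_n)_{n\in\N}$ are then narrowly relatively compact, and that every narrow limit point lies in $\G_o(\m_1,\m_2)$. Your tightness-plus-Prokhorov extraction, identification of the marginals, and lower semicontinuity of the quadratic cost combined with $W_2(\m_{1,n},\m_{2,n})\to W_2(\m_1,\m_2)$ is precisely the standard proof of those cited facts, so the only difference is that you argue self-containedly where the paper quotes the reference.
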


\begin{proof}
It is shown in \cite{Ambrosio2008}, Proposition 7.1.3, that, if $W_2^2(\m_{1,n},\m_{2,n})$ is bounded and $(\m_{1,n})_{n\in\N}$ and $(\m_{2,n})_{n\in\N}$ narrowly converge to $\m_1$ and $\m_2$ respectively, then $(\g_n)_{n\in\N}$ is relatively compact with respect to the narrow convergence and any narrow limit point is an element of $\G_o(\m_1,\m_2)$. 
It is also shown in \cite{Ambrosio2008}, Proposition 7.1.5, that a sequence of probability measures $(\m_n)_{n\in\N}$ satisfies $\lim_{n\to\infty} W_2(\m,\m_n) = 0$ if and only if $(\m_n)_{n\in\N}$ narrowly converges to $\m$ and the sequence $\int_{\domain} |x|^2 \m_n(x)$ is uniformly bounded. 
Consequently, if we assume that there exist $\m_1,\m_2\in \sP_2(\domain)$ such that $\lim_{n\to\infty}W_2(\m_1,\m_{1,n})=0$ and $\lim_{n\to\infty}W_2(\m_2,\m_{2,n})=0$ then, by \cite[Proposition 7.1.5]{Ambrosio2008}, it follows that $(\m_{1,n})_{n\in\N}$ and $(\m_{2,n})_{n\in\N}$ narrowly converge to $\m_1$ and $\m_2$ respectively. 
Moreover, by \cite[Proposition 7.1.3]{Ambrosio2008}, it follows that the sequence $(\g_n)_{n\in\N}$ is relatively compact with respect to the narrow convergence and any narrow limit point is an element of $\G_o(\m_1,\m_2)$. 
Consequently, there exists $\g \in \G_o(\m_1,\m_2)$ such that, up to a subsequence, $(\g_n)_{n\in\N}$ narrowly converges to $\g$.
\end{proof}

\begin{propn}\label{geodesicconvexityspace}
    The space $\sP_2^{rc}(\domain)$ is geodesically convex.
\end{propn}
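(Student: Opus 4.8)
The plan is to take $\m,\n\in\sP_2^{rc}(\domain)$, use Brenier's theorem to single out the canonical geodesic between them, and then verify that every intermediate measure again lies in $\sP_2^{rc}(\domain)$. Since $\m\in\sP_2^r(\domain)$, there is a convex function $\phi$ such that $T:=\nabla\phi$ is the (unique) optimal map with $T_\#\m=\n$, exactly as in the proof of Lemma \ref{approximationlemma} (cf. \cite[Theorem 6.2.10]{Ambrosio2008}, \cite{McCann1997}); the associated geodesic is $[0,1]\ni t\mapsto\m_t:=(S_t)_\#\m$ with $S_t:=(1-t)\,id+tT$. As $\m_0=\m$ and $\m_1=\n$ already lie in $\sP_2^{rc}(\domain)$, it remains to handle $t\in(0,1)$. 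Throughout I would fix a Borel set $D$ of full $\m$-measure on which $\nabla\phi$ exists and $T(D)\subseteq\supp(\n)$ (possible since $T_\#\m=\n$); this device is needed because $T$ is only defined $\m$-almost everywhere.

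For the compact support, note that for $x\in D$ the point $S_t(x)$ is a convex combination of a point of $\supp(\m)$ and a point of $\supp(\n)$, hence lies in the compact convex set $K:=\Hull(\supp(\m)\cup\supp(\n))$. Therefore $\m_t$ is concentrated on $K$, so $\supp(\m_t)\subseteq K$ is compact.

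The crux is absolute continuity. On $D$, monotonicity of the subdifferential of the convex function $\phi$ gives $\langle\nabla\phi(x)-\nabla\phi(y),\,x-y\rangle\geq 0$ for all $x,y\in D$. Writing $R_t:=id+\tfrac{t}{1-t}\nabla\phi$, so that $S_t=(1-t)R_t$, one has for $x,y\in D$
\[
\langle R_t(x)-R_t(y),\,x-y\rangle=|x-y|^2+\tfrac{t}{1-t}\langle\nabla\phi(x)-\nabla\phi(y),\,x-y\rangle\geq|x-y|^2,
\]
whence $|R_t(x)-R_t(y)|\geq|x-y|$ by Cauchy--Schwarz. Thus $R_t|_D$ is injective and $R_t^{-1}\colon R_t(D)\to D$ is $1$-Lipschitz; extending it to a Lipschitz map on $\domain$, it sends $\mathscr{L}^d$-null sets to $\mathscr{L}^d$-null sets. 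Consequently, if $A\subseteq\domain$ is $\mathscr{L}^d$-null then so is $\tfrac{1}{1-t}A$, and
\[
S_t^{-1}(A)\cap D=\{x\in D:\,R_t(x)\in\tfrac{1}{1-t}A\}=R_t^{-1}\!\bigl(\tfrac{1}{1-t}A\cap R_t(D)\bigr)
\]
is $\mathscr{L}^d$-null; since $\m\ll\mathscr{L}^d$ and $\m(D)=1$ this gives $\m_t(A)=\m(S_t^{-1}(A))=0$, i.e. $\m_t\ll\mathscr{L}^d$. Combining with the support bound, $\m_t\in\sP_2^{rc}(\domain)$ for every $t\in[0,1]$, so the geodesic $t\mapsto\m_t$ witnesses geodesic convexity of $\sP_2^{rc}(\domain)$.

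The only genuinely nontrivial point is this last step, and the whole argument rests on the observation that, for $t<1$, $S_t$ is (up to the scalar $1-t$) the gradient of a strongly convex function, so its inverse is $1$-Lipschitz on its domain of definition. The minor technical nuisance is that $T=\nabla\phi$ exists only $\m$-a.e., which is why I work throughout with the full-measure Borel set $D$ rather than with $S_t$ globally; one could also simply invoke McCann's theorem that displacement interpolants of absolutely continuous measures remain absolutely continuous, but the route above keeps the proof self-contained.
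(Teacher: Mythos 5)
Your proposal is correct, and it is worth noting where it diverges from the paper. For the absolute continuity of the interpolants $\m_t$, the paper simply invokes McCann's interpolation result (\cite[Proposition 1.3]{McCann1997}), which states that displacement interpolants between measures in $\sP_2^r(\domain)$ remain in $\sP_2^r(\domain)$; you instead give a self-contained proof via Brenier's theorem and the strong monotonicity of $S_t=(1-t)\bigl(id+\tfrac{t}{1-t}\nabla\phi\bigr)$, deducing that $R_t^{-1}$ is $1$-Lipschitz on $R_t(D)$ and hence that $S_t^{-1}$ of a Lebesgue-null set is $\m$-null. That argument is sound (the only care needed, which you take, is that $T=\nabla\phi$ is defined only $\m$-a.e., and that Lipschitz maps send null sets to null sets), and it buys a proof that does not outsource the key step, essentially reproving the relevant case of McCann's result. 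For compactness of the supports the two arguments are both elementary but phrased differently: the paper argues through the optimal plan $\g$, whose support is compact because its marginals have compact support, so that $\m_t=((1-t)\p^1+t\p^2)_{\#}\g$ is compactly supported; you argue through the map, observing that $S_t(x)$ lies in the compact convex set $\Hull(\supp(\m)\cup\supp(\n))$ for $\m$-a.e.\ $x$. The plan-based formulation has the cosmetic advantage of not needing the a.e.-defined map at all for that step, but your version is equally valid, and working with the unique optimal map is legitimate here since $\m\in\sP_2^r(\domain)$ guarantees the geodesic is unique.
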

\begin{proof}
    In \cite{McCann1997}, Proposition 1.3, it is stated that, for a geodesic $[0,1]\ni t \mapsto \m_t$ with endpoints $\m_1,\m_2 \in \sP_2^{r}(\domain)$, the measure $\m_t \in \sP^r_2(\domain)$ for all $t\in[0,1]$. 
    Consequently, it is left to show that geodesic interpolations preserve compactness of the support. Let $\m_1,\m_2\in \sP_2^{rc}(\domain)$ and let $\g\in \G_o(\m_1,\m_2)$ so that the curve $[0,1]\ni t\mapsto \m_t:= ((1-t)\pi^1+t\pi^2)_{\#}\g$ defines a geodesic between $\m_1$ and $\m_2$. 
    Given any Borel set $A\subset\domain$ and $t\in[0,1]$, it follows from the definition of the pushforward that there exists a set $B\subset \domain \times \domain$ such that $\m_t(A)= \g(B)$. 
    Moreover, since the first and second marginals of $\g$ have compact support, it follows that $\g$ must be compactly supported and, consequently, $\m_t$ must also be compactly supported.
\end{proof}

\begin{lemma}\label{convexitycriterion}
If $F\co\sP_2(\domain)\to\R$ is continuous then $F$ is $\l$-geodesically convex if and only if the restriction of $F$ to $\sP_2^{rc}(\domain)$ is $\l$-geodesically convex.
\end{lemma}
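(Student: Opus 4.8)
The plan is to treat the two implications separately. The forward one is immediate: if $F$ is $\l$-geodesically convex on $\sP_2(\domain)$ and $\m_1,\m_2\in\sP_2^{rc}(\domain)$, then by Definition \ref{geodesicconvexity} some geodesic $[0,1]\ni t\mapsto\m_t$ between $\m_1$ and $\m_2$ satisfies \eqref{geodesicconvexityinequality}, and by Proposition \ref{geodesicconvexityspace} this geodesic lies entirely in $\sP_2^{rc}(\domain)$; hence the restriction of $F$ to $\sP_2^{rc}(\domain)$ is $\l$-geodesically convex (continuity is not needed here). The converse is an approximation argument in the spirit of \cite[Proposition 9.1.3]{Ambrosio2008}.

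So assume $F$ is continuous and its restriction to $\sP_2^{rc}(\domain)$ is $\l$-geodesically convex, and fix $\m_1,\m_2\in\sP_2(\domain)$. Since $\sP_2^{rc}(\domain)$ is dense in $(\sP_2(\domain),W_2)$ — which follows by truncating the support of a measure to a large ball and then convolving with a smooth compactly supported mollifier — there are sequences $(\m_{1,n})_{n\in\N},(\m_{2,n})_{n\in\N}\subset\sP_2^{rc}(\domain)$ with $W_2(\m_{i,n},\m_i)\to0$ for $i=1,2$. By hypothesis, and since Proposition \ref{geodesicconvexityspace} forces geodesics between $\m_{1,n}$ and $\m_{2,n}$ to stay in $\sP_2^{rc}(\domain)$, for each $n$ there is a plan $\g_n\in\G_o(\m_{1,n},\m_{2,n})$ such that the geodesic $[0,1]\ni t\mapsto\m_t^n:=((1-t)\pi^1+t\pi^2)_{\#}\g_n$ satisfies
\[
F(\m_t^n)\leq(1-t)F(\m_{1,n})+tF(\m_{2,n})-\tfrac{\l}{2}t(1-t)W_2^2(\m_{1,n},\m_{2,n})
\]
for every $t\in[0,1]$. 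The task is then to pass to the limit $n\to\infty$. The right-hand side is straightforward: $F(\m_{i,n})\to F(\m_i)$ by $W_2$-continuity of $F$, and $W_2(\m_{1,n},\m_{2,n})\to W_2(\m_1,\m_2)$ by the triangle inequality. By Lemma \ref{convergencelemma}, along a subsequence there is a plan $\g\in\G_o(\m_1,\m_2)$ with $\g_n\weakly\g$ narrowly; I then set $\m_t:=((1-t)\pi^1+t\pi^2)_{\#}\g$, which is a geodesic between $\m_1$ and $\m_2$, and note that $\m_t^n\weakly\m_t$ narrowly for each $t$ since $(1-t)\pi^1+t\pi^2$ is continuous.

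The main obstacle is the left-hand side: narrow convergence of $\g_n$ alone does not give $W_2(\m_t^n,\m_t)\to0$, which is exactly what is needed in order to invoke continuity of $F$. I would close the gap by upgrading the narrow convergence to $W_2$-convergence. Because the marginals converge in $W_2$, their second moments converge, so $\int|x_i|^2\,d\m_{i,n}\to\int|x_i|^2\,d\m_i$; and because $\g_n$ and $\g$ are optimal, the cross term rewrites as $\int x_1\cdot x_2\,d\g_n=\tfrac{1}{2}\big(\int|x_1|^2\,d\m_{1,n}+\int|x_2|^2\,d\m_{2,n}-W_2^2(\m_{1,n},\m_{2,n})\big)$ and therefore converges to the corresponding expression for $\g$. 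Hence $\int|(1-t)x_1+tx_2|^2\,d\g_n$, the second moment of $\m_t^n$, converges to that of $\m_t$; combined with $\m_t^n\weakly\m_t$ this yields $W_2(\m_t^n,\m_t)\to0$ by the characterisation of $W_2$-convergence in \cite[Proposition 7.1.5]{Ambrosio2008}, so that $F(\m_t^n)\to F(\m_t)$. Passing to the limit in the displayed inequality then gives \eqref{geodesicconvexityinequality} along the geodesic $t\mapsto\m_t$ between $\m_1$ and $\m_2$; as $\m_1,\m_2\in\sP_2(\domain)$ were arbitrary, $F$ is $\l$-geodesically convex.
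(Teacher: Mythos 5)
Your proposal is correct and follows essentially the same route as the paper: approximate $\m_1,\m_2$ by measures in the dense set $\sP_2^{rc}(\domain)$, use Lemma \ref{convergencelemma} to extract a narrowly convergent subsequence of optimal plans, upgrade to $W_2$-convergence of the interpolants via \cite[Proposition 7.1.5]{Ambrosio2008}, and pass to the limit using continuity of $F$ (with Proposition \ref{geodesicconvexityspace} handling the forward implication). Your explicit verification that the second moments of $\m_t^n$ converge, via the cross-term identity for optimal plans, is if anything a more careful justification of the $W_2$-convergence step than the paper's brief appeal to boundedness of second moments.
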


\begin{proof}
        By Proposition \ref{geodesicconvexityspace}, the set $\sP_2^{rc}(\domain)$ is geodesically convex. 
        Consequently, if $F$ is $\l$-geodesically convex, then the restriction of $F$ to $\sP_2^{rc}(\domain)$ is $\l$-geodesically convex and it is left to prove the converse implication. 
       Let $\m_1,\m_2 \in \sP_2(\domain)$. 
       Since $\sP_2^{rc}(\domain)$ is dense in $\sP_2(\domain)$, there exists at least a pair of sequences in $\sP_2^{rc}(\domain)$ which converge in $(\sP_2(\domain),W_2)$ to $\m_1$ and $\m_2$ respectively. 
       We fix one such pair, denoting the sequences $(\m_{1,n})_{n\in\N}$ and $(\m_{2,n})_{n\in\N}$. 
       Additionally, we let $(\g_n)_{n\in\N}$ be the unique sequence of optimal plans satisfying $\g_n \in \G_o(\m_{1,n},\m_{2,n})$ for all $n\in \N$. Using Lemma \ref{convergencelemma}, we extract a subsequence of $(\g_n)_{n\in\N}$ which narrowly converges to $\g\in \G_o(\m_1,\m_2)$. 
       We denote this subsequence $(\g_n)_{n\in\N}$ and denote its marginals $(\m_{1,n})_{n\in\N}$ and $(\m_{2,n})_{n\in\N}$ to avoid relabeling. 
       Now, for $t\in[0,1]$, let $\m_t^n := ((1-t)\pi^1 + t\pi^2)_{\#}\g_n$ and let $\m_t := ((1-t)\pi^1 + t\pi^2)_{\#}\g$. 
       Since $\g_n$ converges narrowly to $\g$, it follows that $\m_t^n$ converges narrowly to $\m_t$ for all $t\in [0,1]$. 
       Moreover, since the sequence $\int_{\domain}|x|^2\m_t^n(x)$ is uniformly bounded, it follows from \cite[Proposition 7.1.5]{Ambrosio2008} that $\lim_{n\to\infty} W_2(\m_t^n,\m_t)= 0$ for all $t\in [0,1]$. 
       If we now assume that $F\co\sP_2(\domain)\to\R$ is continuous and its restriction to $\sP_2^{rc}(\domain)$ is $\l$-geodesically convex, then the following system of inequalities holds for all $t\in[0,1]$.
       \begin{flalign*}
       F(\m_t) = \lim_{n\to\infty} F(\m_t^n) & \leq (1-t)\lim_{n\to\infty} F(\m_{1,n})  + t\lim_{n\to\infty} F(\m_{2,n}) - t(1-t)\frac{\l}{2}\lim_{n\to\infty}  W_2^2(\m_{1,n},\m_{2,n}) \\
       &= (1-t) F(\m_{1}) + tF(\m_2) - t(1-t)\frac{\l}{2} W_2^2(\m_1,\m_2).
       \end{flalign*}
       Since $\m_1$ and $\m_2$ were chosen arbitrarily, we conclude that $F$ is $\l$-geodesically convex.
\end{proof}

\begin{proof}[\hypertarget{proof:theoremtwo} Proof of Theorem \ref{convexitytwo}]
    We first assume that $F$ is $\l$-geodesically convex and prove that Equation \eqref{displacementmonotonetwo} holds for all $\m \in \sP_2(\domain)$ and all $\z \in T_\m\sP_2(\domain)$. 
    Choose $\m \in \sP_2(\domain)$ and $\varphi \in C_c^\infty(\domain)$. Since $\varphi$ has bounded derivatives of every order, there exists $c \in \R$ such that $\frac{1}{c}\varphi + \frac{1}{2}|x|^2$ is convex. 
    It is shown in \cite{Santambrogio2015}, Theorem 1.48, that, if $\xi$ is the gradient of a convex, differentiable function and $\x \in L^2(\m;\domain)$, then $\x$ defines an optimal map between $\m$ and $\x_{\#}\m$. 
    Consequently, $\nabla(\frac{1}{c}\varphi + \frac{1}{2}|x|^2)$ defines an optimal map between $\m$ and $(\frac{1}{c}\nabla\varphi + id)_{\#}\m$. We let $[0,1]\ni t \mapsto \m_t\in \sP_2(\domain)$ denote the geodesic induced by this transport. 
    By Lemma \ref{secondderivative}, the one-sided second derivative of $[0,1]\ni t\mapsto F(\m_t)$ exists at $t=0$ and the following equality holds.
    \[
    c^2\frac{d^2}{dt^2}_{+}F(\m_t) \bigg|_{t=0} =  HessF[\m](\nabla\varphi,\nabla\varphi).
    \]
    Furthermore, since we have assumed that $F$ is $\l$-geodesically convex, the second derivative of $t\mapsto F(\m_t)$ is bounded below by $\l W_2^2(\m,(\frac{1}{c}\nabla\varphi + id)_{\#}\m)$. Consequently, the following inequality holds.
    \[
    HessF[\m](\nabla\varphi,\nabla\varphi) = c^2\frac{d^2}{dt^2}_{+}F(\m_t) \bigg|_{t=0} \geq c^2\l W_2^2(\m,(\frac{1}{c}\nabla\varphi + id)_{\#}\m) =  \l\| \nabla\varphi \|^2_{L^2(\m)}.
    \]
    As $\m$ and $\varphi$ were arbitrary, Equation \eqref{displacementmonotonetwo} must hold for all $\m \in \sP_2(\domain)$ and $\z \in \nabla C_c^\infty(\domain)$. 
    Moreover, by Definition \ref{Hessiandefn}, $HessF[\m]$ is continuous and, consequently, Equation \eqref{displacementmonotonetwo} must also hold for any $\m \in \sP_2(\domain)$ and $\z \in T_\m\sP_2(\domain)$ since $\nabla C_c^\infty(\domain)$ is dense in  $T_\m\sP_2(\domain)$.
    
    We now suppose that Equation \eqref{displacementmonotonetwo} holds for all $\m\in\sP_2(\domain)$ and $\z \in T_\m\sP_2(\domain)$ and prove that $F$ is $\l$-geodesically convex. 
    We let $\m,\n \in \sP_{2}^{rc}(\domain)$, we let $T$ be the optimal map between $\m$ and $\n$ and we let $[0,1]\ni t\mapsto \m_t$ be the geodesic induced by this transport.
    By Lemma \ref{approximationlemma}, there exists a sequence of $C_c^\infty(\domain)$ functions $(\varphi_n)_{n\in\N}$ and $U_n$, a convex neighbourhood of $\Hull(\supp(\m))$, such that each $\varphi_n$ is $(-1)$-convex on $U_n$ and $\nabla \varphi_n\to (T-id)$ in $L^2(\m;\domain)$ as $n\to\infty$. 
    Since $\nabla\varphi_n + id$ is the gradient of a function whose restriction to $U_n$ is convex, it follows, again from \cite[Theorem 1.48]{Santambrogio2015}, that $\nabla\varphi_n + id$ must be the unique optimal map between $\m$ and $(\nabla\varphi_n + id)_{\#}\m$.
    We let $T_n := \nabla\varphi_n + id $, we let $[0,1]\ni t\mapsto \m_t^n :=((1-t)id + tT_n)_{\#}\m$ and we let $T_{t,n}$ denote the unique optimal map between $\m_t^n$ and $({T_n})_{\#}\m$. 
    By Lemma \ref{secondderivative}, the map $[0,1]\ni t\mapsto F(\m^n_t)$ is twice differentiable on $[0,1)$ and, in particular, the following system of inequalities holds for all $t \in [0,1)$.
    \begin{flalign*}
    \frac{d^2}{dt^2}  F(\m^n_t) & = \frac{1}{(1-t)^2}HessF[\m_t^n](T_{t,n}-id,T_{t,n}-id) \geq  \frac{1}{(1-t)^2}\l  \| T_{t,n}-id \| _{L^2(\m_t^n)}^2\\
    & =   \frac{1}{(1-t)^2}\l W_2^2(\m_t^n,{T_n}_{\#}\m) = \l W_2^2(\m,{T_n}_{\#}\m) .
    \end{flalign*} 
The above inequality implies that $[0,1]\ni t\mapsto F(\m_t^n)$ is $ \l  W_2^2(\m,{T_n}_{\#}\m)$-convex. 
Furthermore, as $T_n$ converges to $T$ in $L^2(\m;\domain)$, it follows that $\lim_{n\to\infty} W_2(\m_t^n,\m_t) = 0$ for all $t\in [0,1]$. 
Since $F$ is continuous, this means that $[0,1]\ni t \mapsto F(\m_t)$ is $\l W_2^2(\m,\n)$-convex.
Moreover, since $\m$ and $\n$ were arbitrary measures in $\sP_2^{rc}(\domain)$, the restriction of $F$ to $\sP_2^{rc}(\domain)$ is $\l$-geodesically convex. We subsequently conclude,  by Lemma \ref{convexitycriterion}, that $F$ must be $\l$-geodesically convex on $\sP_2(\domain)$.
\end{proof}
\subsection*{Acknowledgements:}
The author would like to thank A. R. M\'esz\'aros for his valuable advice and stimulating discussions, for many useful comments, and for his direction toward numerous references used in this manuscript. The author also wishes to thank F. Santambrogio and W. Gangbo for their helpful comments and constructive feedback on an earlier version of this manuscript and extends their thanks to G. E. Sodini for pointing out the connections between this manuscript and \cite{cavagnari}. This work was supported by the Engineering and Physical Sciences Research Council [Grant Number EP/W524426/1].
\printbibliography
\end{document}